\newtheorem{theorem}{Theorem}
\theoremstyle{definition}
\newtheorem{prop}{Proposition}
\newtheorem{lemma}{Lemma}
\newtheorem*{game}{Game Rules}
\author{Lara Pudwell\affiliationmark{1}}
\title[On an Erd\H{o}s-Szekeres Game]{On an Erd\H{o}s-Szekeres Game}
\affiliation{Valparaiso University, Valparaiso, IN, USA}
\keywords{permutation, monotone subsequence, game}
\begin{document}
\publicationdata{vol. 27:1, Permutation Patterns 2024}{2026}{6}{10.46298/dmtcs.14855}{2024-11-28; 2024-11-28; 2025-11-27}{2026-02-27}
\maketitle
\begin{abstract}
\vspace{1\baselineskip}
We consider a two-player permutation game inspired by the celebrated Erd\H{o}s-Szekeres Theorem. The game depends on two positive integer parameters $a$ and $b$ and we determine the winner and give a winning strategy when $a \geq b$ and $b \in \left\{2,3,4,5\right\}$.
\end{abstract}
\section{Introduction}\label{S:intro}

Let $\mathcal{S}_n$ be the set of all permutations of $\{1,2,\dots, n\}$.  We say that permutation $\pi \in \mathcal{S}_n$ \emph{contains} $\rho \in \mathcal{S}_m$ if $\pi$ contains a subsequence $\pi_{i_1}\pi_{i_2}\cdots \pi_{i_m}$ such that $i_1<i_2<\cdots < i_m$ and $\pi_{i_a} < \pi_{i_b}$ if and only if $\rho_a < \rho_b$.  Otherwise we say $\pi$ \emph{avoids} $\rho$. One of the oldest theorems that can rephrased in terms of permutation patterns is the Erd\H{o}s-Szekeres Theorem \cite{ES35}, which was first published in 1935, and is phrased in terms of patterns as Theorem \ref{T:ES} below.

\begin{theorem}
\label{T:ES}

Any permutation of length $n \geq (a-1)(b-1)+1$ contains either an increasing subsequence of length $a$ or a decreasing subsequence of length $b$.
\end{theorem}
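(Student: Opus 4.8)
The plan is to argue by contradiction using the pigeonhole principle. First I would suppose, for contradiction, that some permutation $\pi$ of length $n \geq (a-1)(b-1)+1$ contains neither an increasing subsequence of length $a$ nor a decreasing subsequence of length $b$. To each position $i \in \{1, \dots, n\}$ I would attach a pair of statistics: let $x_i$ denote the length of the longest increasing subsequence of $\pi$ that ends at position $i$, and let $y_i$ denote the length of the longest decreasing subsequence ending at position $i$.

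The key step is to show that the assignment $i \mapsto (x_i, y_i)$ is injective. To see this, take any two positions $i < j$ and compare the values $\pi_i$ and $\pi_j$. If $\pi_i < \pi_j$, then appending $\pi_j$ to a longest increasing subsequence ending at position $i$ produces an increasing subsequence ending at position $j$, so $x_j \geq x_i + 1 > x_i$. If instead $\pi_i > \pi_j$, the same argument applied to decreasing subsequences yields $y_j > y_i$. In either case $(x_i, y_i) \neq (x_j, y_j)$, so all $n$ pairs are distinct.

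Finally I would count. Under the contradiction hypothesis every $x_i$ lies in $\{1, \dots, a-1\}$ and every $y_i$ lies in $\{1, \dots, b-1\}$, so each pair $(x_i, y_i)$ belongs to a set of size at most $(a-1)(b-1)$. But I have produced $n \geq (a-1)(b-1)+1$ distinct such pairs, which exceeds the number of available pairs, a contradiction. Hence $\pi$ must contain one of the two desired monotone subsequences.

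I expect the main obstacle---really the only subtle point---to be the injectivity claim, since the counting is immediate once injectivity is in hand; care is needed to treat the increasing and decreasing cases symmetrically and to confirm that the strict inequalities $x_j > x_i$ and $y_j > y_i$ genuinely go through in their respective cases.
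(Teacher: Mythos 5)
Your proof is correct and is essentially the same argument as the paper's (Seidenberg's pigeonhole proof): the paper assigns the identical pair of statistics $(a_i,d_i)$ to each position, proves injectivity by the same two-case comparison, and concludes by counting, merely phrased as a contrapositive rather than a contradiction. The injectivity step you flagged as the subtle point goes through exactly as you describe, so there is no gap.
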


Since we are primarily interested in the monotone patterns, we refer to $12\cdots a$ as $I_a$ and $b \cdots 1$ as $J_b$.

While there are a variety of proofs of Theorem \ref{T:ES}, one of the most concise was given by Seidenberg \cite{S59} in 1959, using an application of the pigeonhole principle, which we revisit here:

\begin{proof}[of Theorem \ref{T:ES}]
Let $\pi  = \pi_1 \cdots \pi_n \in \mathcal{S}_n$.  For each $\pi_i$ associate an ordered pair $(a_i,d_i)$ where $a_i$ is the length of the longest increasing subsequence of $\pi$ ending in $\pi_i$ and $d_i$ is the length of the longest decreasing subsequence of $\pi$ ending in $\pi_i$.  Clearly, for all $i$, $a_i \geq 1$ and $d_i \geq 1$ since the entry $\pi_i$ is itself an increasing (resp. decreasing) subsequence of length 1.

However, we also have that if $i \neq j$, then $(a_i, d_i) \neq (a_j, d_j)$.  This follows from the fact that the entries of $\pi$ are distinct members of $\left\{1,\dots, n\right\}$.  Without loss of generality, suppose $i<j$.  If $\pi_i <\pi_j$, then $a_i <a_j$ since appending $\pi_j$ onto the increasing subsequence of length $a_i$ ending at $\pi_i$ produces an increasing subsequence of length $a_i+1$ ending at $\pi_j$.  Similarly, if $\pi_i>\pi_j$, then $d_i < d_j$.

We have $n$ distinct ordered pairs of positive integers associated with $\pi$.  If $a_i \geq a$ or $d_i \geq b$ for some $i$ then $\pi$ contains an $I_a$ or a $J_b$ pattern.  So, if $\pi$ avoids $I_a$ and $J_b$, then $1 \leq a_i \leq a-1$ and $1 \leq d_i \leq b-1$ for all $i$, which means $n \leq (a-1)(b-1)$.  Taking the contrapositive, if $n \geq (a-1)(b-1)+1$, then $\pi$ contains either $I_a$ or $J_b$ as a pattern.
\end{proof}

In 1983, Harary, Sagan, and West \cite{HSW83} studied a game based on Theorem \ref{T:ES} with rules as follows: Consider the set of integers $\left\{1,2,\dots, (a-1)(b-1)+1\right\}$.  Two players take turns selecting numbers from this set until either an increasing subsequence of length $a$ or a decreasing subsequence of length $b$ is formed. In normal play, the first player to complete such a subsequence wins.  In the mis\`{e}re version of the game, the first player to complete such a subsequence loses. The analysis in \cite{HSW83} is computer-aided, and is limited by the computer memory available at the time.  In particular, each state of the game can be labeled as winning or losing for player 1 based on an analysis of subsequent possible moves.  They determined the winning player for games where $(a-1)(b-1)+1 \leq 15$, and since the tree of game states grows exponentially in $a$ and $b$, they predicted that it is prohibitive to push computer analysis much further.  

In 2009, Albert et. al. \cite{AAAHHMS09} considered the game of Harary, Sagan, and West as well as a number of generalizations.  In particular, they also considered the play of the game on $\mathbb{Q}$ rather than on a finite set.  In this latter setting they determined that the winner of the game on $\mathbb{Q}$ with parameters $a$ and $b$ is the same as the winner of the mis\`{e}re game with parameters $a-1$ and $b-1$ (Proposition 7 of \cite{AAAHHMS09}).  They also determined the winner of the game for $a \geq b$ and $b \leq 5$ (Theorem 9 of \cite{AAAHHMS09}).  The cases where $b=2$ and $b=3$ are simple, but they gave an explicit strategy for a first player win in the cases where $b=4$ and $b=5$.  In addition, for arbitrarily large $b$, they determine that the game where $a=b$ is a first player win for $b \geq 4$.

A collaborative version of this game has also been used as a teaching tool to build intuition about Theorem \ref{T:ES} (see \cite{P21}). Of note, the current author \cite{P24} determined how many maximum length permutations avoid $I_a$ and $J_b$ for the $a \geq b=3$ case via a bijection that tracks the positions and the values of the left-to-right maxima of the relevant permutations. 

In this paper, we consider the two-player competitive version of this game, which is a reformulation of the game on $\mathbb{Q}$ studied in \cite{AAAHHMS09}. In particular, after the first $n$ moves, the current game state is a permutation of length $n$, and the next player can play any entry in $\left\{1,\dots,n+1\right\}$ as their move, appending it to the current permutation.  We will primarily analyze the mis\`{e}re game, where the first player who completes an $I_a$ pattern or a $J_b$ pattern loses.  We give strategies that work for specific choices of $b$ but for any $a \geq b$.  In contrast to the formulation in \cite{AAAHHMS09}, we model the game as shading cells in a 2-dimensional grid while they model it by a building a ternary word with certain restrictions.  For completeness, we show how to use the grid-shading model to see that the parity of $a$ determines the winner when $b=2$ and to give a strategy for a first player win in the avoidance game for $b=3$ and $b=4$.  Since \cite{AAAHHMS09} focuses primarily on normal play, these strategies match their results for $b=3$, $b=4$, and $b=5$.  In addition, we use our grid-shading model to give a winning strategy for the $b=5$ case of mis\`{e}re play, which goes beyond the scope of winning strategies given in \cite{AAAHHMS09}.

The organization of the rest of this paper is as follows.  In Section \ref{S:rep} we develop a visual way of representing game moves, inspired by Seidenberg's proof of Theorem \ref{T:ES}, which was given above.  In Sections \ref{S:2}, \ref{S:3}, \ref{S:4}, and \ref{S:5} we give a general winning strategy for the cases of mis\`{e}re play where $a \geq b$ and $2 \leq b \leq 5$.  Finally, in Section \ref{S:other}, we conclude with observations and topics for future investigation.

\section{Representation of Moves}\label{S:rep}

As described in Section \ref{S:intro}, we consider a two-player game where players take turns appending a new entry onto a permutation.  On the $n$th turn of the game, a player may play any number in $\left\{1,\dots,n\right\}$.  After the $n$th turn of the game, the current game state is a permutation $\pi$ of length $n$.  For all $i \geq 1$, the pattern formed by the first $i$ entries remains unchanged as the game progresses, but we need to track increasing and decreasing subsequences within the permutation being constructed. We call a game that ends when a player completes an $I_a$ or a $J_b$ pattern an $(a,b)$-\emph{game}.  The Seidenberg proof of Theorem \ref{T:ES} tracks increasing and decreasing subsequences using ordered pairs of positive integers.  Motivated by this representation we define the \emph{board} of an $(a,b)$-game to be a grid with $a-1$ columns and $b-1$ rows of cells.  Each cell is indexed by the ordered pair $(c,r)$ where $c$ denotes the column number and $r$ denotes the row number of the cell.

Now, for the $n$th move of an $(a,b)$-game, we shade a cell $(c,r)$ if the longest increasing subsequence of $\pi$ ending in $\pi_n$ has length $c$ and the longest decreasing subsequence of $\pi$ ending in $\pi_n$ has length $r$.  Figure \ref{F:example_board} shows the board in the $(6,5)$-game corresponding to $\pi=163425$. (Note that the hatched regions in the figure are \emph{not} shaded and will be introduced later.)  To check, $\pi_1=1$ corresponds to $(1,1)$, $\pi_2=6$ corresponds to $(2,1)$, $\pi_3=3$ corresponds to $(2,2)$, $\pi_4=4$ corresponds to $(3,2)$, $\pi_5=2$ corresponds to $(2,3)$, and $\pi_6=5$ corresponds to $(4,2)$. Note that the increasing subsequence length is given first for consistency of notation with the name of $(a,b)$-game, while these values correspond to column numbers (rather than row numbers) in the board for vertically efficient use of the page throughout this manuscript.

At the $n$th move, we know that for each $j<n$ either $\pi_j<\pi_n$ (in which case the column number of the $n$th shaded cell is larger than the column number of the $j$th shaded cell) or $\pi_j>\pi_n$ (in which case the row number of the $n$th shaded cell is larger than the row number of the $j$th shaded cell).  To this end, in addition to the shaded cells we call a cell  $(c^*,r^*)$ \emph{eliminated} if $(c,r)$ is shaded and both $c^* \leq c$ and $r^* \leq r$. The cells $(3,1)$, $(4,1)$, $(1,2)$, and $(1,3)$ are eliminated in Figure \ref{F:example_board} and thus marked with hatching.  These are cells that are ineligible to become shaded in future turns, based on the permutation formed so far in the game.  The shaded cells are a subset of the eliminated cells at any point of the game.

\begin{figure}
\begin{center}
\begin{tikzpicture}
\draw (0,0) -- (0,4)--(5,4)--(5,0)--(0,0);
\fill[gray] (0,3) rectangle (2,4);
\fill[gray] (1,2) rectangle (4,3);
\fill[gray] (1,1) rectangle (2,2);
\fill[pattern={Hatch}, pattern color=gray] (2,3) rectangle (4,4);
\fill[pattern={Hatch}, pattern color=gray] (0,1) rectangle (1,3);
\draw[step=1.0,black,thin] (0,0) grid (5,4);
\node at (0.5,3.5) {(1,1)};
\node at (1.5,3.5) {(2,1)};
\node at (2.5,3.5) {(3,1)};
\node at (3.5,3.5) {(4,1)};
\node at (4.5,3.5) {(5,1)};
\node at (0.5,2.5) {(1,2)};
\node at (1.5,2.5) {(2,2)};
\node at (2.5,2.5) {(3,2)};
\node at (3.5,2.5) {(4,2)};
\node at (4.5,2.5) {(5,2)};
\node at (0.5,1.5) {(1,3)};
\node at (1.5,1.5) {(2,3)};
\node at (2.5,1.5) {(3,3)};
\node at (3.5,1.5) {(4,3)};
\node at (4.5,1.5) {(5,3)};
\node at (0.5,0.5) {(1,4)};
\node at (1.5,0.5) {(2,4)};
\node at (2.5,0.5) {(3,4)};
\node at (3.5,0.5) {(4,4)};
\node at (4.5,0.5) {(5,4)};
\end{tikzpicture}
\end{center}
\caption{The board corresponding to $\pi=163425$ in a $(6,5)$-game}
\label{F:example_board}
\end{figure}

We now make some observations about the eliminated region of a board at any point in an $(a,b)$-game.  

First, by definition of eliminated cells, at any point in the game, the set of eliminated board cells forms one contiguous region.  Moreover, this region tells us clearly which cells are available to be shaded by the next move of the game.  We refer to cells that are not eliminated as \emph{open} cells.

\begin{prop}\label{P:legalmoves}
Suppose $S$ is the set of eliminated cells after the $n$th turn of the $(a,b)$-game.  Then move $n+1$ corresponds to shading an open cell that is edge-adjacent to a cell is $S$.  Moreover, every open cell that is edge-adjacent to $S$ corresponds to a possible next move.
\end{prop}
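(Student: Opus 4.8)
The plan is to recast the region $S$ geometrically and then prove the two assertions (every move lands on an open cell adjacent to $S$; every open cell adjacent to $S$ is attainable) separately. Write $(c_i,r_i)$ for the cell shaded by position $i$, so $c_i$ (resp.\ $r_i$) is the length of the longest increasing (resp.\ decreasing) subsequence of the current permutation $\pi=\pi_1\cdots\pi_n$ ending at $\pi_i$. Because a cell is eliminated exactly when it is dominated coordinatewise by a shaded cell, $S$ is \emph{closed under decreasing either coordinate}: if $(c,r)\in S$ and $c'\le c$, $r'\le r$, then $(c',r')\in S$. The first thing I would record is the consequence that an open cell $(c,r)$ is edge-adjacent to $S$ if and only if its left neighbor $(c-1,r)$ or its upper neighbor $(c,r-1)$ lies in $S$: a right or lower neighbor in $S$ would force $(c,r)\in S$ by downward closure, contradicting openness. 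This reduces both directions to statements about $(c-1,r)$ and $(c,r-1)$.

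For the forward direction, suppose move $n+1$ plays value $m$ and shades $(c,r)$. That $(c,r)$ is \emph{open} is the Seidenberg estimate in miniature: if $(c,r)$ were dominated by a shaded cell $(c_0,r_0)$ from an earlier position $i_0$, then comparing $m$ with $\pi_{i_0}$ and appending $m$ to the increasing (if $\pi_{i_0}<m$) or decreasing (if $\pi_{i_0}>m$) subsequence witnessing $(c_0,r_0)$ yields a subsequence ending at $m$ strictly longer than $c$ or $r$, a contradiction. That $(c,r)$ is \emph{adjacent} is the more interesting step. From the usual recursions, $c\ge 2$ gives an earlier position $j$ with $\pi_j<m$ and $c_j=c-1$, and $r\ge 2$ gives an earlier position $k$ with $\pi_k>m$ and $r_k=r-1$. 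If neither neighbor were in $S$, then $(c-1,r)\notin S$ forces $r_j\le r-1$ and $(c,r-1)\notin S$ forces $c_k\le c-1$; but $\pi_j<m<\pi_k$, so whichever of $j,k$ comes first can be extended by the other to give $c_k\ge c$ or $r_j\ge r$, a contradiction. (The boundary cases $c=1$ or $r=1$ are immediate.)

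For the reverse direction I would set up the move-to-cell map explicitly. Playing value $m$ produces the cell $(f(m),g(m))$ with $f(m)=1+\max\{c_i:\pi_i<m\}$ and $g(m)=1+\max\{r_i:\pi_i\ge m\}$ (empty maxima being $0$), so realizing a prescribed open adjacent cell $(c,r)$ amounts to choosing an integer threshold $t=m-1$ with the right behavior. Three ingredients drive the choice. First, openness of $(c,r)$ forces every position with $r_i\ge r$ to have smaller value than every position with $c_i\ge c$ (otherwise the extension argument again produces a shaded cell dominating $(c,r)$), so the forced-below values $\lambda:=\max\{\pi_i:r_i\ge r\}$ and forced-above values $\mu:=\min\{\pi_i:c_i\ge c\}$ satisfy $\lambda<\mu$, leaving room for $t$. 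Second, a ``staircase'' lemma (the cells along a longest monotone subsequence realize every length $1,2,\dots$ up to its length), together with the adjacency bounds $c\le 1+\max_i c_i$ and $r\le 1+\max_i r_i$, shows that elements with $c_i=c-1$ and with $r_i=r-1$ exist; tracing them back along increasing/decreasing subsequences through the extremal forced elements gives $\alpha:=\min\{\pi_i:c_i=c-1\}<\mu$ and $\beta:=\max\{\pi_i:r_i=r-1\}>\lambda$. I would then pick any integer $t\in[\max(\alpha,\lambda),\,\min(\beta,\mu)-1]$ and check directly that $t<\mu$ and $t\ge\lambda$ give $f(m)\le c$ and $g(m)\le r$, while $t\ge\alpha$ and $t<\beta$ upgrade these to equalities, so $m=t+1$ realizes $(c,r)$.

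The main obstacle is the nonemptiness of that threshold interval, i.e.\ the inequality $\alpha<\beta$; unlike $\lambda<\mu$, $\alpha<\mu$, and $\lambda<\beta$, it does not follow from openness alone, and in fact the map $m\mapsto(f(m),g(m))$ is genuinely non-injective and can jump ``diagonally,'' so there is no dimension-counting shortcut. This is exactly where adjacency is used: if $(c-1,r)\in S$ there is a position $j$ with $c_j=c-1$ and $r_j\ge r$, whence $\alpha\le\pi_j\le\lambda<\beta$; and if instead $(c,r-1)\in S$ there is a position $k$ with $r_k=r-1$ and $c_k\ge c$, whence $\beta\ge\pi_k\ge\mu>\alpha$. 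Either way $\alpha<\beta$, closing the argument. I expect the bookkeeping for the degenerate cases ($c=1$, $r=1$, or an empty forced set, where $t$ is pinned to $m=1$ or $m=n+1$) to be the only remaining fuss.
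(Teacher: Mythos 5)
Your proof is correct, but it takes a genuinely different route from the paper's. The paper proves both assertions simultaneously by a sweep argument: fixing the current permutation, it compares the cell shaded by the choice $\pi_{n+1}=j$ with the cell shaded by $\pi_{n+1}=j+1$, and shows in four cases that raising the played value by one moves the resulting cell by at most one column right and/or one row up (the diagonal jump occurring exactly at southeast corners of $S$), while the extreme choices $\pi_{n+1}=1$ and $\pi_{n+1}=n+1$ shade $(1,d+1)$ and $(i+1,1)$, where $d$ and $i$ are the maximal row and column indices occurring in $S$. The attainable cells therefore trace out a monotone staircase joining the two ends of the boundary of $S$, which the paper then identifies with the open cells edge-adjacent to $S$. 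You instead prove the two inclusions separately: the forward direction by Seidenberg-style extension arguments (openness because a dominating earlier cell could be extended by the new digit; adjacency by playing the witnesses with $c_j=c-1$ and $r_k=r-1$ against each other), and the reverse direction by explicitly constructing a value $m$ realizing any prescribed open adjacent cell, via the threshold interval $\left[\max(\alpha,\lambda),\,\min(\beta,\mu)-1\right]$, with adjacency entering exactly once, to force $\alpha<\beta$. Your version is longer but more self-contained: it isolates precisely where openness and where adjacency are used, and it supplies the step the paper leaves implicit, namely why \emph{every} open adjacent cell is attainable --- the paper's closing sentence (``this uniquely describes the open cells that are edge adjacent to $S$'') asserts this identification rather than arguing it, since the four local transitions by themselves only show the attainable set is a staircase with the right endpoints. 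What the paper's sweep buys in exchange is a compact global picture --- connectivity of the set of legal moves along the boundary --- with far less bookkeeping and none of the degenerate-case analysis ($c=1$, $r=1$, empty forced sets) that you rightly flag as remaining fuss at the end.
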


\begin{proof}
We consider the possible values of $(c,r)$ corresponding to $\pi_{n+1}$.  Let $i$ be the maximum column number of a cell in $S$ and let $d$ be the maximum row number of a cell in $S$.

First, consider the extreme cases where $\pi_{n+1}=1$ or $\pi_{n+1}=n+1$.  If $\pi_{n+1}=1$, then playing $\pi_{n+1}$ corresponds to cell $(1,d+1)$.  This is edge adjacent to $S$ since some cell in $S$ has row number $d$, and therefore cell $(1,d)$ is eliminated.  Similarly, if $\pi_{n+1} = n+1$, playing $\pi_{n+1}$ corresponds to cell $(i+1,1)$.  This is also edge adjacent to $S$ since some cell has column number $i$, and therefore cell $(i,1)$ is eliminated.

Now suppose that $\pi_{n+1}=j$ ($1<j<n+1$) corresponds to shading the cell $(c,r)$.  We consider what cell would be shaded if $\pi_{n+1}={j+1}$ instead in four cases. In all cases, the entries $\pi_1\cdots \pi_n$ have the same relative order.  The only difference is that if $\pi_{n+1}=j$, there exists $1 \leq k \leq n$ such that $\pi_k = j+1$, while if $\pi_{n+1}=j+1$, instead $\pi_k=j$.  When $\pi_k=j+1$ and $\pi_{n+1}=j$, it is not possible for $\pi_k$ to be part of the increasing subsequence of length $c$ ending in $\pi_{n+1}$ since $\pi_k>\pi_{n+1}$.
\begin{itemize}
\item $\pi_k$ played no role in the decreasing subsequence of length $r$ ending in $\pi_{n+1}=j$, and so $\pi_{n+1}=j$ and $\pi_{n+1}=j+1$ result in shading the same $(c,r)$ cell.
\item $\pi_k$ corresponds to shading cell $(c,r-1)$ and $\pi_k$ was part of the decreasing subsequence of length $r$ when $\pi_{n+1}=j$.  In the situation where $\pi_{n+1}=j+1$, $\pi_k \pi_{n+1}$ are the last two entries in an increasing subsequence of length $c+1$, while $\pi_{n+1}$ ends a decreasing subsequence of length $r-1$.  In other words, $\pi_{n+1}=j+1$ corresponds to shading $(c+1,r-1)$.
\item $\pi_k$ corresponds to shading cell $(c^*,r-1)$ where $c^*<c$ and $\pi_k=j+1$ played a role in forming the decreasing subsequence of length $r$ when $\pi_{n+1}=j$.  Now that $\pi_{n+1}={j+1}$, $\pi_{n+1}$ completes a decreasing subsequence of length $r-1$, while $c$ remains unchanged.  In other words, $\pi_{n+1}=j+1$ corresponds to shading $(c,r-1)$.
\item $\pi_k$ corresponds to shading cell $(c,r^*)$ where $r^*<r-1$.  Since $r^*<r-1$, there must be a different subsequence that contributed to the decreasing pattern of length $r$ when $\pi_{n+1}=j$, and so when $\pi_{n+1}=j+1$, the decreasing subsequence ending in $\pi_{n+1}$ remains the same.  However, the increasing subsequence length goes up by 1.  In other words, $\pi_{n+1}=j+1$ corresponds to shading cell $(c+1,r)$.
\end{itemize}

These four cases describe the set of possible cells that can be shaded by various choices of $\pi_{n+1}$.  We gave specific examples that showed $(1,d+1)$ and $(i+1,1)$ are possible.  We also saw that each open cell that could be shaded by a choice of $\pi_{n+1}$ differs in row and/or column number by at most 1 from another open cell.  In fact, the only time when both numbers change is when they are immediately below and immediately right of a eliminated cell that forms a southeast corner of $S$.  This uniquely describes the open cells that are edge adjacent to $S$.
\end{proof}

As an example, consider Figure \ref{F:nextmoves}, which shows the possible ordered pairs corresponding to next moves in a $(6,5)$-game whose current permutation is $\pi=163425$.  The left side of the figure shows the the points $(i,\pi_i)$ for $1 \leq i \leq 6$ with ordered pairs given for various choices of $\pi_{7}$, while the right side of the figure shows the eliminated cells after the sixth turn, and highlights the cells corresponding to choices of $\pi_7$.

\begin{figure}
\begin{center}
\scalebox{0.8}{\begin{tikzpicture}
\draw (0,0) -- (0,6)--(6,6)--(6,0)--(0,0);
\draw[step=1.0,black,thin] (0,0) grid (6,6);
\fill[black] (0.5,0.5) circle (0.3cm); 
\fill[black] (1.5,5.5) circle (0.3cm); 
\fill[black] (2.5,2.5) circle (0.3cm); 
\fill[black] (3.5,3.5) circle (0.3cm); 
\fill[black] (4.5,1.5) circle (0.3cm); 
\fill[black] (5.5,4.5) circle (0.3cm); 
\fill[red!50] (6,5.5) rectangle (7,6.5);
\node at (6.5,6) {$(5,1)$};
\fill[orange!50] (6,4.5) rectangle (7,5.5);
\node at (6.5,5) {$(5,2)$};
\fill[yellow!50] (6,3.5) rectangle (7,4.5);
\node at (6.5,4) {$(4,3)$};
\fill[green!50] (6,2.5) rectangle (7,3.5);
\node at (6.5,3) {$(3,3)$};
\fill[green!50] (6,1.5) rectangle (7,2.5);
\node at (6.5,2) {$(3,3)$};
\fill[blue!50] (6,0.5) rectangle (7,1.5);
\node at (6.5,1) {$(2,4)$};
\fill[violet!50] (6,-0.5) rectangle (7,0.5);
\node at (6.5,0) {$(1,4)$};
\end{tikzpicture}}\hspace{0.1in}\raisebox{0.5in}{\scalebox{0.8}{\begin{tikzpicture}
\draw (0,0) -- (0,4)--(5,4)--(5,0)--(0,0);
\fill[gray] (0,3) rectangle (2,4);
\fill[gray] (1,2) rectangle (4,3);
\fill[gray] (1,1) rectangle (2,2);
\fill[pattern={Hatch}, pattern color=gray] (2,3) rectangle (4,4);
\fill[pattern={Hatch}, pattern color=gray] (0,1) rectangle (1,3);
\fill[red!50] (4,3) rectangle (5,4);
\fill[orange!50] (4,2) rectangle (5,3);
\fill[yellow!50] (3,1) rectangle (4,2);
\fill[green!50] (2,1) rectangle (3,2);
\fill[blue!50] (1,0) rectangle (2,1);
\fill[violet!50] (0,0) rectangle (1,1);
\draw[step=1.0,black,thin] (0,0) grid (5,4);
\node at (0.5,3.5) {(1,1)};
\node at (1.5,3.5) {(2,1)};
\node at (2.5,3.5) {(3,1)};
\node at (3.5,3.5) {(4,1)};
\node at (4.5,3.5) {(5,1)};
\node at (0.5,2.5) {(1,2)};
\node at (1.5,2.5) {(2,2)};
\node at (2.5,2.5) {(3,2)};
\node at (3.5,2.5) {(4,2)};
\node at (4.5,2.5) {(5,2)};
\node at (0.5,1.5) {(1,3)};
\node at (1.5,1.5) {(2,3)};
\node at (2.5,1.5) {(3,3)};
\node at (3.5,1.5) {(4,3)};
\node at (4.5,1.5) {(5,3)};
\node at (0.5,0.5) {(1,4)};
\node at (1.5,0.5) {(2,4)};
\node at (2.5,0.5) {(3,4)};
\node at (3.5,0.5) {(4,4)};
\node at (4.5,0.5) {(5,4)};
\end{tikzpicture}}}
\end{center}
\caption{Ordered pairs representing possible next moves in a $(6,5)$-game where $\pi=163425$.}
\label{F:nextmoves}
\end{figure}

Finally, the player who shades the $(a-1,b-1)$ cell is the winner since this cell being shaded means that the entire board is eliminated, and the next player must add a entry to the permutation that either completes an $I_a$ pattern or a $J_b$ pattern.  Now that we have made this translation from entries to boards, we may play the Erd\H{o}s-Szekeres game as a game of shading cells on a board, rather than thinking merely in terms of one-line permutation notation.

In summary, we can rephrase the Erd\H{o}s-Szekeres game as follows:

\begin{game}

Consider a $(b-1)\times (a-1)$ array of cells.  Players 1 and 2 take turns as follows:

\begin{itemize}
\item Player 1 begins by shading the cell in the top left corner.
\item For each subsequent move, a player shades a cell that is edge-adjacent to the eliminated region.  All cells that are above and/or left of their chosen cell should also be eliminated.
\item Players alternate taking turns until the board is full.  The player who claims the bottom right corner wins.
\end{itemize}
\end{game}

We remark that this game is effectively a variant of the classic combinatorial game Chomp.

Notice that there are many permutations that may correspond to eliminating the same region of a game board.  As a small example, both $\pi=132$ and $\pi=312$ correspond to a board where a $2\times 2$ region has been eliminated.  However, these are both permutations where the most recent entry corresponds to the label $(2,2)$, and so the same amount of progress has been made towards forming an $I_a$ or $J_b$ pattern.  In terms of tracking a win or loss in the permutation game, no information has been lost.

Finally, we remark that this region of eliminated cells encodes exactly the same information about $\pi$ as the ternary words used in \cite{AAAHHMS09}.  The notation in Albert et. al. is motivated directly by Schensted's bumping algorithm in \cite{S61} rather than Seidenberg's proof.  They represent game states as words on the symbols R, B, and P with at least one P, not starting with B, not ending with R, and not containing RB as a factor.  Notice that the boundary of the eliminated region in our representation is contiguous.  If we consider eliminated cells with bottom and/or right edges on the boundary of the eliminated region, we can traverse these cells from lower left to upper right, and record R if only the bottom edge is on the boundary of the eliminated region, B if only the right edge is on the boundary of the eliminated region, and P if both the bottom and right edges are on the boundary of the shaded region.  For example, the gray  eliminated region corresponding to $\pi=163425$ shown in Figure \ref{F:example_board} corresponds to the word $RPRPB$.  The word notation of Albert et. al. has the advantage that it is language theoretic.  The grid presentation in this paper has the advantage that one can play the game focused on visual geometric information.

In the following sections, we articulate a strategy for winning this permutation game in terms of board shading.  The strategies for $2 \leq b \leq 4$ exactly match the strategies that were presented in language theoretic terms in \cite{AAAHHMS09} while the strategy for $b=5$ is completely new and extends their results. We will show that the empty board is in class $\mathcal{N}$ (i.e., a next-player winning position) when $3 \leq b \leq 5$.  When $b=2$, whether the empty board is in class $\mathcal{N}$ or class $\mathcal{P}$ (i.e. a next-player losing position) depends on the parity of $a$.

\section{Strategy when b=2}\label{S:2}

Suppose $a \geq b=2$.  Using the representation described in Section \ref{S:rep},  the game board can be visualized as a one-row, $(a-1)$-column grid, and each player colors the left-most unclaimed cell on their turn.  Once the cell in column $(a-1)$ is claimed (by player 2 if $a$ is odd, or by player 1 if $a$ is even), the other player has no more legal moves and loses.

In terms of permutations, each player will play a new largest entry on their turn since playing any smaller entry creates a $J_2$ pattern and automatically loses the game.  The resulting game permutation is $\pi = I_a$, and the game ends on the $a$th turn, leading to a loss for player 1 if $a$ is odd and a loss for player 2 if $a$ is even.

\section{Strategy when b=3}\label{S:3}

We now consider the $(a,3)$-game ($a \geq 3$), and we give a winning strategy both in terms of board shading and in terms of permutation entries.

\begin{theorem}
Player 1 has a winning strategy in the $(a,3)$-game where $a \geq 3$.
\end{theorem}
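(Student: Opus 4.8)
The plan is to pass from permutations to the board picture of the Game Rules and to track the state by the lengths of the two rows. Since $b=3$, the board has two rows, and by Proposition~\ref{P:legalmoves} the shaded-and-eliminated region $S$ is always a contiguous staircase determined by its two row lengths $(\lambda_1,\lambda_2)$ with $a-1\ge\lambda_1\ge\lambda_2\ge 0$. Player~1's forced opening move produces the state $(1,0)$, and a player wins exactly when they complete the board, i.e.\ reach $(a-1,a-1)$. First I would record the legal moves from a state $(\lambda_1,\lambda_2)$: one may extend the top row to $(\lambda_1+1,\lambda_2)$ when $\lambda_1<a-1$; one may shade a bottom-row cell $(c,2)$ for any $c$ with $\lambda_2<c\le\lambda_1$, advancing the state to $(\lambda_1,c)$; and when $\lambda_1=\lambda_2$ one may additionally shade $(\lambda_1+1,2)$, reaching $(\lambda_1+1,\lambda_1+1)$. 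The crucial feature is that a single bottom-row move can set $\lambda_2$ to \emph{any} value up to $\lambda_1$, so completing a full top row can be turned into a board-completing move in one step.

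Next I would propose that the set of losing positions for the player to move is
\[
\mathcal{P}=\{(k,k-1): 1\le k\le a-2\}\cup\{(a-1,a-1)\},
\]
that is, the states in which the top row is exactly one cell longer than the bottom row (with the top row not yet full), together with the completed board. I would then verify the three standard conditions that characterize $\mathcal{P}$ as the exact set of losing positions: (i) the terminal state $(a-1,a-1)$ lies in $\mathcal{P}$ and admits no move; (ii) every legal move out of a state in $\mathcal{P}\setminus\{(a-1,a-1)\}$ lands outside $\mathcal{P}$; and (iii) from every state outside $\mathcal{P}$ there is a legal move into $\mathcal{P}$. Condition~(ii) is immediate from the move list: from $(k,k-1)$ the only moves are to $(k+1,k-1)$ (difference two) and to $(k,k)$ (a non-full rectangle), and neither is in $\mathcal{P}$.

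For condition~(iii) I would split the states outside $\mathcal{P}$ according to how the top row compares to the bottom row. If $\lambda_1-\lambda_2\ge 2$ and $\lambda_1\le a-2$, then the bottom-row move to $(\lambda_1,\lambda_1-1)$ lands in $\mathcal{P}$; if $\lambda_1=\lambda_2=k$ with $k\le a-3$, then extending the top row reaches $(k+1,k)\in\mathcal{P}$; and the remaining cases all satisfy $\lambda_1=a-1$ or $\lambda_1=\lambda_2=a-2$, from which a single bottom-row move completes the board to $(a-1,a-1)\in\mathcal{P}$. Granting (i)--(iii), $\mathcal{P}$ is exactly the set of positions from which the mover loses. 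Since Player~1's opening leaves Player~2 to move from $(1,0)\in\mathcal{P}$, Player~2 loses and Player~1 wins, and the winning strategy is simply \emph{always move back into $\mathcal{P}$}: in words, keep the top row exactly one cell ahead of the bottom row, and complete the board as soon as the top row becomes full.

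I expect the main obstacle to be the bookkeeping at the right-hand edge $\lambda_1=a-1$, where the invariant-restoring response must be replaced by the board-completing move, together with getting the boundary inside $\mathcal{P}$ right---the switch from the family $(k,k-1)$ to the single corner $(a-1,a-1)$. Once the shading argument is complete I would translate it back into permutation language, where ``top row one ahead of bottom row'' says that the longest increasing subsequence ending at the most recent digit is exactly one longer than the longest decreasing subsequence ending there, so Player~1's concrete move is to append the digit that restores this relationship (and to play the smallest available digit forcing completion once an increasing run of length $a-1$ has been built).
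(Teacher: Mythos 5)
Your proposal is correct and is essentially the paper's proof: your set $\mathcal{P}=\{(k,k-1)\}\cup\{(a-1,a-1)\}$ is exactly the invariant the paper's Player 1 maintains (first $i$ cells of row 1 and first $i-1$ cells of row 2 shaded), your condition (iii) responses are the paper's mirroring moves, and your boundary case $\lambda_1=a-1$ or $\lambda_1=\lambda_2=a-2$ is the paper's endgame in which Player 1 seizes the corner $(a-1,2)$. The only difference is presentational: you package the strategy as a formal P-position verification, while the paper states the strategy directly and checks the endgame by hand.
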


\begin{proof}
The game board in this situation is a $2 \times (a-1)$ grid. After each of their first $a-2$ moves, player 1 can produce a board where the first $i$ cells of row 1 are shaded and the first $i-1$ cells of row 2 are shaded, as illustrated in Figure \ref{F:play3}.  To start, player 1 must shade 1 cell in row 1 and 0 cells in row 2.  After that, there are only two possible moves:
\begin{itemize}
\item If player 2 shades the leftmost open cell in row 2, player 1 shades the leftmost open cell in row 1.
\item If player 2 shades the leftmost open cell in row 1, player 1 shades the leftmost open cell in row 2.
\end{itemize}

For the end game, after $(a-2)$ moves for player 1 and $(a-3)$ moves for player 2, all cells except for $(a-1,1)$, $(a-2,2)$, and $(a-1,2)$ are shaded.

Because of Proposition \ref{P:legalmoves}, player 2 must shade either $(a-1,1)$ or $(a-2,2)$.  In either case, player 1 can shade $(a-1,2)$ on their next move and force a player 2 loss.
\end{proof}

In this straightforward situation, shading a cell in row 1 corresponds to playing a new left-to-right maximum, while shading a cell in row 2 corresponds to playing a non-left-to-right-maximum, in keeping with observations of maximum length permutations avoiding $I_a$ and $321$, made in \cite{P24}.  In terms of permutations, the winning strategy is as follows: Let $m_1 < m_2 < \cdots < m_{\ell}=n$ be the values of the left-to-right maxima of $\pi$ at the start of player 1's turn.

\begin{itemize}
\item If player 2's move was not $m_{\ell}$, play $\pi_{n+1}=n+1$.
\item If player 2's move was $m_{\ell}$, play $m_{\ell-1}$.
\end{itemize}

Finally, when $\pi$ has length $2a-4$, regardless of what player 2's most recent move was, player 1 plays $\pi_{2a-3} = 2a-4$, corresponding to cell $(a-1,2)$.  On their next move, player 2 will complete either an $I_a$ pattern (by playing $\pi_{2a-2} \geq 2a-3$) or $J_3$ pattern (by playing $\pi_{2a-2} \leq 2a-4$) to lose the game.

\begin{figure}
\begin{center}
\begin{tikzpicture}
\draw (0,0) -- (0,2)--(5,2)--(5,0)--(0,0);
\draw (-1,0)--(0,0);
\draw (-1,2)--(0,2);
\draw[step=1.0,black,thin] (0,0) grid (5,2);
\fill[gray] (-1,1) rectangle (4,2);
\fill[gray] (-1,0) rectangle (3,1);
\end{tikzpicture}
\end{center}
\caption{A class $\mathcal{P}$ position for the $(a,3)$-game.}
\label{F:play3}
\end{figure}

\section{Strategy when b=4}\label{S:4}

We now consider the $(a,4)$-game ($a \geq 4$).  This is the first case where the game board has sufficiently many rows that the same shading can be obtained by more than one permutation, and so we only describe it in terms of shaded boards.  However, an interested reader -- thinking in terms of permutations -- can consider the ordered pairs corresponding to each entry in the game so far to determine a next entry that follows the strategy given here.

\begin{theorem}
Player 1 has a winning strategy in the $(a,4)$-game where $a \geq 4$.
\end{theorem}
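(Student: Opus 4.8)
The plan is to give an explicit Player 1 strategy and verify it, following the template of the $b=3$ proof but with a substantially richer invariant. Working in the Game Rules picture, the board is the $3\times(a-1)$ grid and a position is recorded by the lengths $(w_1,w_2,w_3)$ of the shaded-and-eliminated prefixes of the three rows, so that $a-1\ge w_1\ge w_2\ge w_3\ge 0$. Because the bottom-right cell $(a-1,3)$ can only be filled by being shaded, and shading it fills the whole board, this is a normal-play game: the player who makes the last move (shading $(a-1,3)$) wins. Hence Player 1 wins exactly by always being able to answer into a position from which the player to move loses, and I would phrase the entire argument this way: exhibit a family $\mathcal{F}$ of ``safe'' shapes, all of which are losing for the player to move, show that Player 1's forced opening lands in $\mathcal{F}$, and show that $\mathcal{F}$ is closed in the sense that every legal opponent move out of an $\mathcal{F}$-shape can be met by a legal Player 1 move back into $\mathcal{F}$.

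To find $\mathcal{F}$ I would first compute the losing positions for the smallest boards. For $a=4$ and $a=5$ this is quick and reveals the shape of the strategy: Player 1 can keep the game on a ``balanced staircase,'' passing through shapes such as $(1,0,0)$, the doubled steps $(k,k,k-2)$, the full diagonal $(a-1,a-2,a-3)$, and finally $(a-1,a-1,a-1)$, creating each safe shape on an odd-numbered move. Equivalently, Player 1 forces the total number of moves to be odd; since the board has $3(a-1)$ cells and each move shades exactly one cell while merely eliminating the rest of its upper-left quadrant, steering this parity amounts to controlling how many cells are eliminated without ever being shaded. Unlike the $b=3$ case, a single staircase type does not suffice: at a given stage $\mathcal{F}$ must contain several shape-types (for $a=5$, for instance, both $(3,3,1)$ and $(3,2,2)$ are required), and Player 1's reply depends on which move the opponent just made. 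I would therefore present $\mathcal{F}$ explicitly as a short list of shape-types, indexed by how far the staircase has advanced across the $a-1$ columns, together with a response table.

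The verification then has three parts: (i) the opening --- the forced first move $(1,0,0)$, together with at most one further Player 1 move, lies in $\mathcal{F}$; (ii) closure --- for each shape in $\mathcal{F}$ and each legal opponent move, the prescribed reply is legal and returns to $\mathcal{F}$; and (iii) the endgame --- $\mathcal{F}$ contains the full board, and the only way to reach it is on a Player 1 move, so Player 1 is the one who shades $(a-1,3)$. Parts (i) and (iii) are short. The crux is (ii), and it is genuinely harder than for $b=3$, because with three rows the opponent has, besides the three unit row-extensions, \emph{deep} moves: shading a cell $(c,r)$ with $c>w_r+1$ is legal whenever that cell sits under a longer row above it (that is, $c\le w_{r-1}$), and such a move advances row $r$ across several columns at once while simultaneously pushing up every shorter row above it.

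Establishing closure against these deep moves, uniformly in $a$, is the main obstacle. The closure proof has to be a case analysis over the opponent's move type and over the local profile of the staircase --- in particular over which consecutive rows are currently level, since only level rows are bumped together --- and it must respect both boundaries: the left wall, where a row length can never decrease (so when $w_3=0$ or $w_2=w_3$ certain replies are simply unavailable), and the right wall at column $a-1$, which is where the move-parity is pinned down and which feeds directly into the endgame. I expect the bulk of the work, and the place where the $b=4$ argument is most likely to break, to be confirming that for every deep opponent move there is a matching Player 1 reply keeping the board in $\mathcal{F}$, and that $\mathcal{F}$ can be chosen so that this matching exists simultaneously for all $a\ge 4$.
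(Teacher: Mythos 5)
Your outline reproduces the architecture of the paper's proof exactly --- a family of safe profiles, a forced opening into it, closure under every opponent reply, and a right-wall endgame --- and you even name the correct family: your $(3,3,1)$ and $(3,2,2)$ are the two shapes $(k,k,k-2)$ and $(k,k-1,k-1)$ that the paper's proof maintains, and your ``full diagonal'' $(a-1,a-2,a-3)$ is its penultimate endgame position. But the proposal stops exactly where the proof has to start: your part (ii), the response table and the verification that every legal opponent move from a safe shape can be answered back into a safe shape, is never carried out; you defer it and even flag it as the step ``most likely to break.'' That finite check \emph{is} the theorem; without it you have a strategy ansatz, not a proof. For the record, once the family is frozen the check is short. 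From $(k,k,k-2)$ the opponent's only legal moves are $(k+1,1)$, $(k+1,2)$, $(k-1,3)$, and the deep move $(k,3)$; Player 1 answers with $(k,3)$, $(k-1,3)$, $(k+1,2)$, $(k+1,1)$ respectively, landing in $(k+1,k,k)$ or $(k+1,k+1,k-1)$. From $(k,k-1,k-1)$ the only moves are $(k+1,1)$, $(k,2)$, $(k,3)$, answered by $(k,3)$, $(k+1,2)$, $(k+1,1)$, landing in the same two shapes. Seven cases in all, uniform in $k$ and $a$.

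Two assertions in your plan would also need repair before that check could be executed. First, a deep move never ``pushes up every shorter row above it'': legality of shading $(c,r)$ with $c>w_r+1$ requires $c\le w_{r-1}$, and the staircase property $w_1\ge w_2\ge w_3$ then guarantees every row above row $r$ already has length at least $c$, so a deep move affects only its own row; it is the \emph{shallow} move $c=w_r+1$, played when $w_{r-1}=w_r$, that bumps the level rows above. Second, your worry that closure against deep moves must be arranged ``uniformly in $a$'' is misplaced --- inside the family above, adjacent rows differ by at most $2$, so the only deep move the opponent ever has is $(k,3)$ from the first shape --- but there is a genuine $a$-dependence you would have to confront at the right wall: the mid-game reply to $(k+1,1)$ from $(k,k,k-2)$ produces $(k+1,k,k)$, which is safe only while $k+1\le a-2$; when $k+1=a-1$ the opponent simply takes the corner $(a-1,3)$ of $(a-1,a-2,a-2)$ and wins. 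So the strategy must switch to different responses (targeting the diagonal $(a-1,a-2,a-3)$) as soon as row 1 has $a-2$ cells filled; this endgame case analysis is precisely what the paper supplies and what your proposal leaves open.
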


\begin{proof}
The game board in this situation is a $3 \times (a-1)$ grid. 

To begin the game, player 1 shades cell $(1,1)$.  Regardless of whether player 2 shades cell $(1,2)$ or cell $(2,1)$, player 1 responds by shading cell $(2,2)$.

After this opening sequence, player 1 can always end their turn with a board of one of the following two forms:
\begin{enumerate}
\item Rows 1 and 2 have $k$ eliminated cells and row 3 has $k-2$ eliminated cells for some $k \geq 2$.

\begin{center}
\scalebox{0.6}{
\begin{tikzpicture}
\fill[pattern={Hatch}, pattern color=gray!40] (0.5,0) rectangle (1,3);
\draw[dashed] (0.5,0)--(1,0);
\draw[dashed] (0.5,3)--(1,3);
\draw (3,3)--(6,3)--(6,0)--(1,0);
\fill[gray] (1,1) rectangle (3,3);
\end{tikzpicture}}
\end{center}

\item Row 1 has $k$ eliminated cells and rows 2 and 3 have $k-1$ eliminated cells for some $k \geq 2$.

\begin{center}
\scalebox{0.6}{\begin{tikzpicture}
\fill[pattern={Hatch}, pattern color=gray!40] (0.5,0) rectangle (1,3);
\draw[dashed] (0.5,0)--(1,0);
\draw[dashed] (0.5,3)--(1,3);
\draw (2,3)--(6,3)--(6,0)--(1,0);
\fill[gray] (1,2) rectangle (2,3);
\end{tikzpicture}}
\end{center}
\end{enumerate}

Note that the opening sequence produces a board that fits the first case.  Now we consider each available move for player 2 and how player 1 may respond.

In the first case, player 2 has four available moves: (i) $(k+1,1)$, (ii) $(k+1,2)$, (iii) $(k-1,3)$, or (iv) $(k,3)$, shown below in black.

\begin{center}
\begin{tabular}{cccc}
(i)&(ii)&(iii)&(iv)\\
\scalebox{0.5}{
\begin{tikzpicture}
\fill[pattern={Hatch}, pattern color=gray!40] (0.5,0) rectangle (1,3);
\draw[dashed] (0.5,0)--(1,0);
\draw[dashed] (0.5,3)--(1,3);
\draw (3,3)--(6,3)--(6,0)--(1,0);
\fill[gray] (1,1) rectangle (3,3);
\fill[black] (3,2) rectangle (4,3);
\end{tikzpicture}}&\scalebox{0.5}{
\begin{tikzpicture}
\fill[pattern={Hatch}, pattern color=gray!40] (0.5,0) rectangle (1,3);
\draw[dashed] (0.5,0)--(1,0);
\draw[dashed] (0.5,3)--(1,3);
\draw (3,3)--(6,3)--(6,0)--(1,0);
\fill[gray] (1,1) rectangle (3,3);
\fill[pattern={Hatch}, pattern color=black] (3,2) rectangle (4,3);
\fill[black] (3,1) rectangle (4,2);
\end{tikzpicture}}&\scalebox{0.5}{
\begin{tikzpicture}
\fill[pattern={Hatch}, pattern color=gray!40] (0.5,0) rectangle (1,3);
\draw[dashed] (0.5,0)--(1,0);
\draw[dashed] (0.5,3)--(1,3);
\draw (3,3)--(6,3)--(6,0)--(1,0);
\fill[gray] (1,1) rectangle (3,3);
\fill[black] (1,0) rectangle (2,1);
\end{tikzpicture}}&\scalebox{0.5}{
\begin{tikzpicture}
\fill[pattern={Hatch}, pattern color=gray!40] (0.5,0) rectangle (1,3);
\draw[dashed] (0.5,0)--(1,0);
\draw[dashed] (0.5,3)--(1,3);
\draw (3,3)--(6,3)--(6,0)--(1,0);
\fill[gray] (1,1) rectangle (3,3);
\fill[pattern={Hatch}, pattern color=black] (1,0) rectangle (2,1);
\fill[black] (2,0) rectangle (3,1);
\end{tikzpicture}}
\end{tabular}
\end{center}

In each of these cases, player 1 has a natural response, shown below in blue.  

\begin{center}
\begin{tabular}{cccc}
(i)&(ii)&(iii)&(iv)\\
\scalebox{0.5}{
\begin{tikzpicture}
\fill[pattern={Hatch}, pattern color=gray!40] (0.5,0) rectangle (1,3);
\draw[dashed] (0.5,0)--(1,0);
\draw[dashed] (0.5,3)--(1,3);
\draw (3,3)--(6,3)--(6,0)--(1,0);
\fill[gray] (1,1) rectangle (3,3);
\fill[black] (3,2) rectangle (4,3);
\fill[pattern={Hatch}, pattern color=blue] (1,0) rectangle (2,1);
\fill[blue] (2,0) rectangle (3,1);
\end{tikzpicture}}&\scalebox{0.5}{
\begin{tikzpicture}
\fill[pattern={Hatch}, pattern color=gray!40] (0.5,0) rectangle (1,3);
\draw[dashed] (0.5,0)--(1,0);
\draw[dashed] (0.5,3)--(1,3);
\draw (3,3)--(6,3)--(6,0)--(1,0);
\fill[gray] (1,1) rectangle (3,3);
\fill[pattern={Hatch}, pattern color=black] (3,2) rectangle (4,3);
\fill[black] (3,1) rectangle (4,2);
\fill[blue] (1,0) rectangle (2,1);
\end{tikzpicture}}&\scalebox{0.5}{
\begin{tikzpicture}
\fill[pattern={Hatch}, pattern color=gray!40] (0.5,0) rectangle (1,3);
\draw[dashed] (0.5,0)--(1,0);
\draw[dashed] (0.5,3)--(1,3);
\draw (3,3)--(6,3)--(6,0)--(1,0);
\fill[gray] (1,1) rectangle (3,3);
\fill[black] (1,0) rectangle (2,1);
\fill[pattern={Hatch}, pattern color=blue] (3,2) rectangle (4,3);
\fill[blue] (3,1) rectangle (4,2);
\end{tikzpicture}}&\scalebox{0.5}{
\begin{tikzpicture}
\fill[pattern={Hatch}, pattern color=gray!40] (0.5,0) rectangle (1,3);
\draw[dashed] (0.5,0)--(1,0);
\draw[dashed] (0.5,3)--(1,3);
\draw (3,3)--(6,3)--(6,0)--(1,0);
\fill[gray] (1,1) rectangle (3,3);
\fill[pattern={Hatch}, pattern color=black] (1,0) rectangle (2,1);
\fill[black] (2,0) rectangle (3,1);
\fill[blue] (3,2) rectangle (4,3);
\end{tikzpicture}}
\end{tabular}
\end{center}

In (ii) and (iii), player 1's response results in a board with $k+1$ eliminated cells in the first two rows and $k-1$ eliminated cells in the third row, which matches case 1.  In (i) and (iv), player 1's response results in a board with $k+1$ eliminated cells in the first row and $k$ eliminated cells in rows 2 and 3, which matches case 2.

Similarly, in the second case, player 2 has three available moves: (i) $(k+1,1)$, (ii) $(k,2)$, or (iii) $(k,3)$, shown below in black.

\begin{center}
\begin{tabular}{ccc}
(i)&(ii)&(iii)\\
\scalebox{0.5}{\begin{tikzpicture}
\fill[pattern={Hatch}, pattern color=gray!40] (0.5,0) rectangle (1,3);
\draw[dashed] (0.5,0)--(1,0);
\draw[dashed] (0.5,3)--(1,3);
\draw (2,3)--(6,3)--(6,0)--(1,0);
\fill[gray] (1,2) rectangle (2,3);
\fill[black] (2,2) rectangle (3,3);
\end{tikzpicture}}&\scalebox{0.5}{\begin{tikzpicture}
\fill[pattern={Hatch}, pattern color=gray!40] (0.5,0) rectangle (1,3);
\draw[dashed] (0.5,0)--(1,0);
\draw[dashed] (0.5,3)--(1,3);
\draw (2,3)--(6,3)--(6,0)--(1,0);
\fill[gray] (1,2) rectangle (2,3);
\fill[black] (1,1) rectangle (2,2);
\end{tikzpicture}}&\scalebox{0.5}{\begin{tikzpicture}
\fill[pattern={Hatch}, pattern color=gray!40] (0.5,0) rectangle (1,3);
\draw[dashed] (0.5,0)--(1,0);
\draw[dashed] (0.5,3)--(1,3);
\draw (2,3)--(6,3)--(6,0)--(1,0);
\fill[gray] (1,2) rectangle (2,3);
\fill[pattern={Hatch}, pattern color=black] (1,1) rectangle (2,2);
\fill[black] (1,0) rectangle (2,1);
\end{tikzpicture}}
\end{tabular}
\end{center}

In each of these cases, player 1 again has a natural response, shown below in blue.

\begin{center}
\begin{tabular}{ccc}
(i)&(ii)&(iii)\\
\scalebox{0.5}{\begin{tikzpicture}
\fill[pattern={Hatch}, pattern color=gray!40] (0.5,0) rectangle (1,3);
\draw[dashed] (0.5,0)--(1,0);
\draw[dashed] (0.5,3)--(1,3);
\draw (2,3)--(6,3)--(6,0)--(1,0);
\fill[gray] (1,2) rectangle (2,3);
\fill[black] (2,2) rectangle (3,3);
\fill[pattern={Hatch}, pattern color=blue] (1,1) rectangle (2,2);
\fill[blue] (1,0) rectangle (2,1);
\end{tikzpicture}}&\scalebox{0.5}{\begin{tikzpicture}
\fill[pattern={Hatch}, pattern color=gray!40] (0.5,0) rectangle (1,3);
\draw[dashed] (0.5,0)--(1,0);
\draw[dashed] (0.5,3)--(1,3);
\draw (2,3)--(6,3)--(6,0)--(1,0);
\fill[gray] (1,2) rectangle (2,3);
\fill[black] (1,1) rectangle (2,2);
\fill[pattern={Hatch}, pattern color=blue] (2,2) rectangle (3,3);
\fill[blue] (2,1) rectangle (3,2);
\end{tikzpicture}}&\scalebox{0.5}{\begin{tikzpicture}
\fill[pattern={Hatch}, pattern color=gray!40] (0.5,0) rectangle (1,3);
\draw[dashed] (0.5,0)--(1,0);
\draw[dashed] (0.5,3)--(1,3);
\draw (2,3)--(6,3)--(6,0)--(1,0);
\fill[gray] (1,2) rectangle (2,3);
\fill[pattern={Hatch}, pattern color=black] (1,1) rectangle (2,2);
\fill[black] (1,0) rectangle (2,1);
\fill[blue] (2,2) rectangle (3,3);
\end{tikzpicture}}
\end{tabular}
\end{center}

In (ii), player 1's response results in a board with $k+1$ eliminated cells in the first two rows and $k-1$ eliminated cells in the third row, which matches case 1.  In (i) and (iii), player 1's response results in a board with $k+1$ eliminated cells in the first row and $k$ eliminated cells in rows 2 and 3, which matches case 2.

The endgame begins at the start of player 2's turn when row 1 has $a-2$ eliminated cells, shown in two cases below.

\begin{center}
\begin{tabular}{cc}
(i)&(ii)\\
\scalebox{0.5}{
\begin{tikzpicture}
\draw (0,0) -- (0,3)--(5,3)--(5,0)--(0,0);
\draw (-1,0)--(0,0);
\draw (-1,3)--(0,3);
\draw[step=1.0,black,thin] (0,0) grid (5,3);
\fill[gray] (-1,1) rectangle (4,3);
\fill[gray] (-1,0) rectangle (2,1);
\end{tikzpicture}}&\scalebox{0.5}{\begin{tikzpicture}
\draw (0,0) -- (0,3)--(5,3)--(5,0)--(0,0);
\draw (-1,0)--(0,0);
\draw (-1,3)--(0,3);
\draw[step=1.0,black,thin] (0,0) grid (5,3);
\fill[gray] (-1,2) rectangle (4,3);
\fill[gray] (-1,0) rectangle (3,2);
\end{tikzpicture}}\\
\end{tabular}
\end{center}

In case (i), player 2's options are $(a-1,1)$, $(a-1,2)$, $(a-3,3)$, or $(a-2,3)$.  If player 2 plays $(a-1,2)$ or $(a-2,3)$, player 1 can play $(a-1,3)$ and win the game.  So player 2 will play $(a-1,1)$ or  $(a-3,3)$.  Whichever of these two options player 2 takes, player 1 takes the other move.  This forces player 2 to play $(a-1,2)$ or $(a-2,3)$ on their next move, and player 1 takes the corner cell of $(a-1,3)$, forcing a player 2 loss.

In case (ii) player 2's options are $(a-1,1)$, $(a-2,2)$, or $(a-2,3)$.  If player 2 plays $(a-2,3)$, player 1 can play $(a-1,3)$ and win the game, or player 2 will play $(a-1,1)$ or $(a-2,2)$.  In either case, player 1 plays the other move.  This forces player 2 to play $(a-1,2)$ or $(a-2,3)$ on their next move, and player 1 takes the corner cell of $(a-1,3)$, forcing a player 2 loss.
\end{proof}

\section{Strategy when b=5}\label{S:5}

Finally, we consider the $(a,5)$-game for $a \geq 5$.  Although we give a winning strategy for this situation, there are more options for how player 1 selects a move leading to the final endgame.  The strategy given in this section was determined in an experimental manner.  Since this is a two-player combinatorial game with perfect information, when $a$ is known, every possible game state can be labeled as a next-player win (in class $\mathcal{N}$) or a next-player loss (in class $\mathcal{P}$) via computer search.  Although we are limited to small values of $a$ for a complete computer analysis, once the computer makes this labeling of all states for several specific values of $a$, an interested human can use the computer data to conjecture a subset of positions that are in class $\mathcal{P}$ regardless of $a$ and form a strategy that guarantees that regardless of player 2's move, player 1 can respond in such a way to achieve a position in the subset.  As in the previous section, there are three clear phases of the game: opening moves, midgame, and endgame.

\begin{theorem}\label{T:fivewin}
Player 1 has a winning strategy in the $(a,5)$-game where $a \geq 5$.
\end{theorem}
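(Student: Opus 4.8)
The plan is to mirror the three-phase structure used for $b=3$ and $b=4$: an opening, a repeating midgame pattern, and a forced endgame. Throughout, I track the board by its vector of row-widths $(w_1,w_2,w_3,w_4)$ with $w_1 \ge w_2 \ge w_3 \ge w_4$ (by Proposition \ref{P:legalmoves} the shaded/eliminated region is always such a staircase), and I designate a distinguished collection $\mathcal{P}$ of boards — those that player 1 will always hand to player 2 — each of which is a next-player-loss position. Player 1's strategy is to keep the game inside $\mathcal{P}$: after every player-1 move the board lies in $\mathcal{P}$ with the top row strictly wider than after player 1's previous turn, so $w_1$ increases each round and the game terminates with player 1 claiming the corner $(a-1,4)$. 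The set $\mathcal{P}$ is read off from the computer-generated labeling described before the theorem: for several small $a$ every board is tagged win/loss for the mover, and $\mathcal{P}$ is the subfamily of next-player-loss boards whose description is independent of $a$.

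The proof then splits into three lemmas. First, an \emph{opening lemma}: from player 1's forced first move $(1,1)$, player 1 can reach a board of $\mathcal{P}$ within a bounded number of moves regardless of player 2's replies, exactly as the $b=4$ opening produces $(2,2,0)$. Second, an \emph{invariance lemma}: for every board in $\mathcal{P}$ and every legal move available to player 2 (enumerated via Proposition \ref{P:legalmoves} as the open cells edge-adjacent to the shaded region), player 1 has a reply that returns the board to $\mathcal{P}$; since $w_1$ strictly increases and the board is finite, iterating is well-defined and the process halts. Third, an \emph{endgame lemma}: the boards of $\mathcal{P}$ with $w_1=a-2$ are terminal next-player-loss shapes generalizing Figure \ref{F:end4}, and from each of them player 1 can answer any player-2 move so as to force player 2 to vacate the corner cell $(a-1,4)$, which player 1 then claims to win.

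The main obstacle is pinning down $\mathcal{P}$ and proving the invariance lemma, and here the genuinely new difficulty of $b=5$ appears. The naive generalization of the $b\le 4$ families — boards whose row-deficiencies $w_1-w_i$ form a partition of $b-2$, which for $b=5$ would be the shapes $(k,k,k,k-3)$, $(k,k,k-1,k-2)$, and $(k,k-1,k-1,k-1)$ — is \emph{not} closed under the reply operation. For example, from $(k,k-1,k-1,k-1)$ player 2 may shade $(k,2)$ to reach $(k,k,k-1,k-1)$, and no single legal move returns this board to any of the three balanced shapes, since every available move changes the wrong number of cells. Consequently $\mathcal{P}$ must be strictly larger and less symmetric than the $b\le 4$ pattern suggests, which is precisely why the family is located experimentally rather than guessed. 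Establishing the theorem therefore reduces to a finite but substantial case analysis: list the shape classes comprising $\mathcal{P}$, and for each class and each of player 2's replies exhibit a legal player-1 response landing back in $\mathcal{P}$ with $w_1$ incremented, together with the separate verifications of the opening and endgame lemmas.
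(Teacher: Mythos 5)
Your outline reproduces the paper's architecture faithfully: the paper also works with a computer-located family of next-player-loss positions (its $\widehat{S}$, seven midgame states, plus an endgame family $\widehat{E}$), an opening lemma (Lemma \ref{L:start}), a closure/invariance lemma (Lemma \ref{L:midgame}), and an endgame lemma (Lemma \ref{L:end}) finished by a tit-for-tat argument for claiming $(a-1,4)$. Your observation that the naive family with row-deficiencies forming a partition of $b-2$ is not closed under the reply operation is correct and consistent with the paper: the actual invariant set contains shapes such as $(k,k,k-2,k-2)$ and $(k,k-1,k-2,k-3)$, and even a staircase-like state, none of which have deficiency sum $3$. But this is where the gap lies: you never exhibit $\mathcal{P}$, and you never carry out the closure verification. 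That content \emph{is} the theorem. Reducing the statement to ``list the shape classes comprising $\mathcal{P}$ and check all replies'' is not a proof when you simultaneously concede that you do not know what $\mathcal{P}$ is and that the obvious candidate fails. The paper's entire proof consists of precisely the material you defer: the explicit seven states (with their inequalities on $k$), the explicit three endgame states, and the move-by-move case analysis showing closure and the transitions between phases.

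Two further concrete problems. First, your shape classes are row-width vectors alone, but the paper's invariant family cannot be described that way: its states 1 through 4 carry \emph{parity} conditions on the number of open cells in row 1 (and the prescribed responses branch on that parity, e.g.\ in state 5 player 1's reply to a row-3 move depends on whether row 1 has an odd or even number of open cells). So the data you propose to track is provably insufficient for the known solution, and you give no reason a parity-free family exists. Second, your termination invariant --- that $w_1$ is strictly larger after each of player 1's turns --- is both unnecessary and unachievable by the known strategy: termination already follows because every move shades at least one new cell, and in the paper's strategy there are exchanges that touch row 1 not at all (from state 5, say $(k,k,k-2,k-2)$ with an even number of open cells in row 1, player 2 plays the second open cell of row 3 and player 1 replies in row 4, landing in state 2 with $w_1$ still equal to $k$). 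Imposing that extra constraint on $\mathcal{P}$ may well make your intended family empty of workable strategies, and in any case it is one more unverified assumption stacked on the missing case analysis.
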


Before we formally prove Theorem \ref{T:fivewin}, we outline player 1's strategy.

For the midgame, player 1 acts to leave the board in one of the following seven states after their turn:
\begin{enumerate}
\item Row 1 has an odd number of open cells and $k$ eliminated cells, while rows 2, 3, and 4 have $k-1$ eliminated cells where $k \geq 2$.

\begin{center}
\scalebox{0.4}{\begin{tikzpicture}
\draw (0,0)--(1,0)--(1,3)--(2,3)--(2,4)--(0,4)--(0,0);
\draw (0,4)--(5,4)--(5,0)--(0,0);
\draw[step=1.0,black,thin] (0,0) grid (2,4);
\fill[gray] (0,0) rectangle (1,4);
\fill[gray] (1,3) rectangle (2,4);
\node at (3,3.5) {\Huge odd};
\end{tikzpicture}}
\end{center}

\item Rows 1, 2, and 3 have an even number of open cells and $k$ eliminated cells, while row 4 has $k-1$ eliminated cells where $k \geq 1$.

\begin{center}
\scalebox{0.4}{\begin{tikzpicture}
\draw (0,0)--(1,0)--(1,1)--(2,1)--(2,4)--(0,4)--(0,0);
\draw (0,4)--(5,4)--(5,0)--(0,0);
\draw[step=1.0,black,thin] (0,0) grid (2,4);
\fill[gray] (0,0) rectangle (1,4);
\fill[gray] (1,1) rectangle (2,4);
\node at (3,3.5) {\Huge even};
\end{tikzpicture}}
\end{center}

\item Rows 1 and 2 have an even number of open cells and $k$ eliminated cells, while row 3 has $k-1$ eliminated cells, and row 4 has $k-3$ eliminated cells where $k \geq 3$.

\begin{center}
\scalebox{0.4}{\begin{tikzpicture}
\draw (0,0)--(1,0)--(1,1)--(3,1)--(3,2)--(4,2)--(4,4)--(0,4)--(0,0);
\draw (0,4)--(7,4)--(7,0)--(0,0);
\draw[step=1.0,black,thin] (0,0) grid (4,4);
\fill[gray] (0,0) rectangle (1,4);
\fill[gray] (1,1) rectangle (3,4);
\fill[gray] (3,2) rectangle (4,4);
\node at (5,3.5) {\Huge even};
\end{tikzpicture}}
\end{center}

\item Row 1 has an even number of open cells and $k$ eliminated cells, rows 2 and 3 have $k-1$ eliminated cells, and row 4 has $k-2$ eliminated cells where $k \geq 2$.

\begin{center}
\scalebox{0.4}{\begin{tikzpicture}
\draw (0,0)--(1,0)--(1,1)--(2,1)--(2,3)--(3,3)--(3,4)--(0,4)--(0,0);
\draw (0,4)--(6,4)--(6,0)--(0,0);
\draw[step=1.0,black,thin] (0,0) grid (3,4);
\fill[gray] (0,0) rectangle (1,4);
\fill[gray] (1,1) rectangle (2,4);
\fill[gray] (2,3) rectangle (3,4);
\node at (4,3.5) {\Huge even};
\end{tikzpicture}} 
\end{center}

\item Rows 1 and 2 have $k$ eliminated cells and non-zero open cells, while rows 3 and 4 have $k-2$ eliminated cells where $k \geq 3$.

\begin{center}
\scalebox{0.4}{\begin{tikzpicture}
\draw (0,0)--(1,0)--(1,2)--(3,2)--(3,4)--(0,4)--(0,0);
\draw (0,4)--(5,4)--(5,0)--(0,0);
\draw[step=1.0,black,thin] (0,0) grid (3,4);
\fill[gray] (0,0) rectangle (1,4);
\fill[gray] (1,2) rectangle (2,4);
\fill[gray] (2,2) rectangle (3,4);
\end{tikzpicture}}
\end{center}

\item Row 1 has $k$ eliminated cells and non-zero open cells, row 2 has $k-1$ such cells, row 3 has $k-2$ such cells, and row 4 has $k-3$ such cells where $k \geq 3$.

\begin{center}\scalebox{0.4}{\begin{tikzpicture}
\draw (0,0)--(1,0)--(1,1)--(2,1)--(2,2)--(3,2)--(3,3)--(4,3)--(4,4)--(0,4)--(0,0);
\draw (0,4)--(6,4)--(6,0)--(0,0);
\draw[step=1.0,black,thin] (0,0) grid (4,4);
\fill[gray] (0,0) rectangle (1,4);
\fill[gray] (1,1) rectangle (2,4);
\fill[gray] (2,2) rectangle (3,4);
\fill[gray] (3,3) rectangle (4,4);
\end{tikzpicture}}
\end{center}

\item Row 1 has one more eliminated cell than row 2 and non-zero open cells.  Row 2 has at least two more eliminated cells than row 3, and row 3 has one more eliminated cell than row 4.

\begin{center}\scalebox{0.4}{\begin{tikzpicture}
\draw (0,0)--(1,0)--(1,1)--(2,1)--(2,2)--(5,2)--(5,3)--(6,3)--(6,4)--(0,4)--(0,0);
\draw (0,4)--(8,4)--(8,0)--(0,0);
\draw[step=1.0,black,thin] (0,0) grid (6,4);
\fill[white] (3.1,0.1) rectangle (3.9,3.9);
\fill[gray] (4,3) rectangle (6,4);
\fill[gray] (4,2) rectangle (5,3);
\fill[gray] (0,3) rectangle (3,4);
\fill[gray] (0,2) rectangle (3,3);
\fill[gray] (0,1) rectangle (2,2);
\fill[gray] (0,0) rectangle (1,1);
\node at (3.5,3) {$\cdots$};
\end{tikzpicture}}
\end{center}
\end{enumerate}

We refer to these seven states as the set $\widehat{S}$.  A tedious computer-assisted analysis shows that if the current board is in one of the states from $\widehat{S}$ at the start of player 2's turn, then no matter where player 2 moves, player 1 has a response that returns to a state in $\widehat{S}$.

Further, it is possible to navigate from any of these states to having one of the following three states to prepare for an endgame:

\begin{enumerate}
\item Both columns $a-1$ and $a-2$ have open cells.  Column $a-1$ has one more open cell than column $a-2$.  All other columns are completely eliminated.

\begin{center}
\scalebox{0.4}{\begin{tikzpicture}
\draw (0,0) -- (0,4)--(5,4)--(5,0)--(0,0);
\draw (-1,0)--(0,0);
\draw (-1,4)--(0,4);
\draw[step=1.0,black,thin] (0,0) grid (5,4);
\fill[gray] (-1,3) rectangle (4,4);
\fill[gray] (-1,0) rectangle (3,3);
\end{tikzpicture}}
\end{center}

\item Column $a-1$ has at least two open cells; row 4 has the same number of open cells.  All other cells are are eliminated.

\begin{center}
\scalebox{0.4}{\begin{tikzpicture}
\draw (0,0) -- (0,4)--(5,4)--(5,0)--(0,0);
\draw (-1,0)--(0,0);
\draw (-1,4)--(0,4);
\draw[step=1.0,black,thin] (0,0) grid (5,4);
\fill[gray] (-1,1) rectangle (4,4);
\fill[gray] (-1,0) rectangle (1,1);
\end{tikzpicture}} 
\end{center}

\item Rows 1 and 2 have no open cells.   Both rows 3 and 4 have open cells.  Row 4 has one more open cell than row 3.

\begin{center}
\scalebox{0.4}{\begin{tikzpicture}
\draw (0,0) -- (0,4)--(5,4)--(5,0)--(0,0);
\draw (-1,0)--(0,0);
\draw (-1,4)--(0,4);
\draw[step=1.0,black,thin] (0,0) grid (5,4);
\fill[gray] (-1,2) rectangle (5,4);
\fill[gray] (-1,1) rectangle (2,2);
\fill[gray] (-1,0) rectangle (1,1);
\end{tikzpicture}} 
\end{center}
\end{enumerate}

We refer to this set of three states as set $\widehat{E}$.

We will ultimately prove Theorem \ref{T:fivewin} by a sequence of lemmas.

\begin{lemma}\label{L:start}
By the end of their fourth move in an $(a,5)$-game with $a \geq 5$, player 1 can leave the board in a state from $\widehat{S}$.
\end{lemma}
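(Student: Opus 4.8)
The plan is to prove Lemma \ref{L:start} by an explicit, exhaustive game tree for the first seven turns. Player 1's four moves are turns $1,3,5,7$ and player 2's are turns $2,4,6$, so ``the end of their fourth move'' is turn $7$. The first move is not a choice: by the Game Rules player 1 shades the top-left cell $(1,1)$, giving used-profile $(s_1,s_2,s_3,s_4)=(1,0,0,0)$, where $s_r$ denotes the number of shaded-or-eliminated cells in row $r$. Since the used region is always the up-left closure of the shaded cells, we have $s_1\geq s_2\geq s_3\geq s_4$ throughout, and each turn shades exactly one new cell, so after turn $7$ there are exactly seven shaded cells and $s_1+s_2+s_3+s_4\geq 7$. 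At each of player 2's turns I would enumerate the legal replies using Proposition \ref{P:legalmoves} (the open cells edge-adjacent to the used region), which are few near the corner, and prescribe player 1's response, then verify that the position after turn $7$ lies in $\widehat{S}$.

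A key simplification I would exploit is that $a\geq 5$ means at least four columns, so after at most seven turns the used region is confined to the top-left $4\times 4$ corner and never meets the right boundary; hence the legal-move structure is identical for all $a\geq 5$ and player 1's geometric replies can be chosen uniformly in $a$. The only way $a$ enters is through the \emph{classification} of the final shape into one of the seven states: the open remainder in row $r$ has size $(a-1)-s_r$, and states $1$--$4$ impose parity conditions on these remainders while states $5$--$7$ do not. Thus player 1 plays a fixed opening, producing a fixed row-profile, and that profile is matched to a state of $\widehat{S}$ according to $a\bmod 2$. Because player 2's first move is forced to be either $(2,1)$ or $(1,2)$, and the continuations branch only modestly, the tree reduces to a handful of distinct lines, in each of which I would steer toward a concrete target: for instance state $2$ with $k=2$, profile $(2,2,2,1)$ (reachable in exactly seven shading moves with no extra eliminations, valid when $a$ is odd), or state $4$ with $k=3$, profile $(3,2,2,1)$ (valid when $a$ is even), or, to sidestep parity entirely, a parity-free state such as state $5$ with $k=3$, profile $(3,3,1,1)$, whose requirement that rows $1$ and $2$ have nonzero open cells is guaranteed precisely by $a\geq 5$.

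I expect the main obstacle to be \textbf{completeness together with the parity bookkeeping}, rather than any single hard idea. I must confirm that every one of player 2's move sequences on turns $2,4,6$ is covered, that the prescribed reply is legal (edge-adjacent and open, tracking any extra cells eliminated when a shaded cell juts beyond the current row-profile), and that each leaf matches the \emph{precise} definition of some state in $\widehat{S}$ --- including the correct open-cell parities. The subtle point is that player 1 may have to choose between two targets of the same shape-type according to $a\bmod 2$ so that the counts $(a-1)-s_r$ have the parity demanded by states $1$--$4$, falling back on the parity-free states $5$--$7$ when neither parity-constrained target is available. This is the small-scale analogue of the ``tedious computer-assisted analysis'' invoked for the midgame, and for the opening the tree is small enough to enumerate by hand, so the proof is a matter of carrying out that enumeration and checking each leaf against the figures defining $\widehat{S}$.
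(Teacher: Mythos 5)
Your plan follows the same route as the paper's proof of Lemma \ref{L:start}: an explicit finite enumeration of the opening tree, with player 1's replies chosen uniformly and the parameter $a$ entering only through which state of $\widehat{S}$ the final shape matches. Several of your supporting observations are sound: states 1--4 are indeed the parity-constrained ones, and your candidate targets are genuine states ($(2,2,2,1)$ is state 2 when $a$ is odd, $(3,2,2,1)$ is state 4 when $a$ is even, and $(3,3,1,1)$ is state 5 for every $a \geq 5$). But the proposal stops exactly where the proof must begin: you never write down player 1's reply to each of player 2's possible moves, and that response table \emph{is} the content of the lemma. Announcing that you ``would steer toward'' a target profile does not show that player 2 cannot force you away from it; forceability against every line of play is precisely what has to be verified. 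The paper's proof is short because it simply gives the replies: $(2,2)$ answering either of player 2's first moves; mirroring $(3,1)/(1,3)$ to reach state 6; mirroring $(3,2)/(2,3)$ to reach state 3 when $a-1$ is odd; and, when $a-1$ is even, playing $(3,3)$ and then mirroring player 2's next move across the diagonal to land in state 2, 3, or 4.

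There is also a step that would fail as stated: your key simplification that for all $a \geq 5$ the opening region ``never meets the right boundary,'' so that the legal-move structure and player 1's replies are identical for all $a \geq 5$. This is false for $a=5$, where the board has only four columns and the opening region can reach column $a-1 = 4$ within the first seven turns. The paper itself must special-case $a=5$: in the branch where $a-1$ is even, after player 1 shades $(3,3)$ the resulting $3 \times 3$ block on the $4 \times 4$ board is already an endgame position in $\widehat{E}$ (every remaining open cell lies in row 4 or column $a-1$), and the paper hands off to Lemma \ref{L:end} rather than reaching $\widehat{S}$ at all in that line. Your parity-free fallback target $(3,3,1,1)$ could in fact be used to repair this branch (for instance, answering player 2's $(3,2)$ with $(1,4)$), but your uniformity claim conceals exactly the case that requires separate treatment, so the enumeration you defer would not go through in the uniform way you describe.
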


\begin{proof}
Player 1's first move is $(1,1)$.  Regardless of whether player 2 chooses $(1,2)$ or $(2,1)$, player 1's second move is $(2,2)$ which results in a $2 \times 2$ eliminated region.

If player 2 plays cell $(3,1)$ or $(1,3)$, then player 1 responds by playing the other option which leaves the board in State 6 from $\widehat{S}$ after player 1's third move.

On the other hand, suppose player 2 plays $(3,2)$ or $(2,3)$ as their second move. We proceed in cases.

If $a-1$ is odd, then player 1 plays whichever of $(3,2)$ and $(2,3)$ was not chosen by player 2.  This results in State 3 from $\widehat{S}$.

If $a-1$ is even, then player 1 plays $(3,3)$ which results in a $3\times 3$ eliminated region.  Player 2 has six choices for what they can play in response: $(4,1)$, $(4,2)$, $(4,3)$, $(1,4)$, $(2,4)$, or $(3,4)$.  If $a=5$, the board is already in a state from $\widehat{E}$ and player 1 responds in a tit-for-tat way described in Lemma \ref{L:end}.  Otherwise, player 1 responds as described below.

If player 2 plays $(4,1)$ or $(2,4)$, player 1 plays the other of these two cells, which results in State 4 from $\widehat{S}$ on player 1's fourth move.

If player 2 plays $(4,2)$ or $(1,4)$, player 1 plays the other of these two cells, which results in State 3 from $\widehat{S}$ on player 1's fourth move.

If player 2 plays $(4,3)$ or $(3,4)$, player 1 plays the other of these two cells, which results in State 2 from $\widehat{S}$ on player 1's fourth move.
\end{proof}

Next, we proceed to the midgame, by showing player 1 has a strategy to return the game to a state from $\widehat{S}$ no matter how player 2 acts.  The proof we give documents the result of computer search in a manner that is admittedly tedious, but able to be verified by an interested reader.

\begin{lemma}\label{L:midgame}
If the board is in a state from $\widehat{S}$ at the end of player 1's turn, regardless of player 2's next move, player 1 can return the game to a state from $\widehat{S}$.
\end{lemma}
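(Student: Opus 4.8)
The plan is to reduce the lemma to a finite, explicitly organized verification. By Proposition \ref{P:legalmoves}, from any board the legal moves are exactly the open cells that are edge-adjacent to the shaded/eliminated region $S$, and each of the seven states in $\widehat{S}$ has an explicitly described, staircase-shaped boundary whose number of inner corners is bounded independently of $a$. So for each state I would first enumerate the cells player 2 can legally shade; these fall into finitely many geometric \emph{types}, each being either a move along a flat segment of the boundary or a move into a re-entrant (southeast) corner which, per the final case of Proposition \ref{P:legalmoves}, changes both the row and column coordinate at once and hence eliminates several cells simultaneously.

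For each of the seven states I would then process player 2's move-types one at a time, in each case naming player 1's response cell and checking three things: first, that player 1's response is itself legal (open and edge-adjacent to the region updated by player 2's move); second, that the resulting shaded/eliminated region again matches one of the seven templates; and third, that the defining row-count differences and parity conditions of that template hold. The responses follow a \emph{tit-for-tat} pattern already visible in the opening analysis of Lemma \ref{L:start}: when player 2 plays one cell of a natural pair straddling a step of the staircase, player 1 plays its partner. Organizing the check as such pairings is precisely what keeps the row-counts balanced and forces each response to land back in $\widehat{S}$.

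The hard part will be twofold. First, the parity bookkeeping is delicate: states 1 through 4 are distinguished not by their gross shape but by whether designated rows have an odd or even number of open cells, and a pair of moves (player 2's together with player 1's reply) changes a given row's open-cell count by one or two depending on how the elimination rule applies, so in each case I must confirm that the parity actually produced is the one the target template demands. Second, the set of available player-2 moves is not preserved within a single state: a move into a re-entrant corner can carry the board from, say, state 5 into state 6, or from state 3 into state 4, so the bookkeeping is genuinely a transition function on the whole set $\widehat{S}$ rather than a within-state invariant. Verifying that this transition function is total, i.e. that every legal player-2 move from every state in $\widehat{S}$ admits a response returning to $\widehat{S}$, is exactly what the computer search certifies, and the write-up's role is to record, state by state and type by type, the response cell and the resulting template so that an interested reader can reconstruct the entire check by hand.

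Finally, I would handle the degenerate boundary cases separately: when the staircase parameter $k$ sits at its minimum admissible value, or when a row is one cell from full, some move-types vanish or two templates coincide, and there I would verify directly that player 1's response still exists and still yields a state in $\widehat{S}$ (or, when the board has reached the configuration that opens the endgame, defer to Lemma \ref{L:end}). These edge cases are where the side-conditions such as ``at least two more'' and ``$k \geq 3$'' in the definitions of the states do real work, so I would check that player 1's responses never violate them.
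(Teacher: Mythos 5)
Your proposal correctly identifies the overall shape of the argument --- it is the same approach as the paper's: an exhaustive, human-verifiable case analysis over the seven states of $\widehat{S}$ and over player 2's legal move types, giving for each an explicit response by player 1 and checking that the result again matches one of the seven templates, with degenerate cases deferred to the endgame lemma. The paper's proof is precisely such a list, organized state by state and row by row.

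However, there is a genuine gap: you never actually produce the response table, and that table \emph{is} the mathematical content of the lemma. Your write-up describes what a proof would contain (``naming player 1's response cell and checking three things\dots'') and even says that ``the write-up's role is to record, state by state and type by type, the response cell and the resulting template,'' but no response cell is ever named and no transition is ever verified. Appealing to ``what the computer search certifies'' cannot close this gap, for two reasons. First, the lemma quantifies over all $a \geq 5$, i.e.\ over infinitely many board widths; a computer search handles only specific small $a$, so the proof must be a case analysis uniform in the translation parameter $k$ --- exactly the thing left undone. Second, your proposed organizing principle, that responses are tit-for-tat partners ``straddling a step of the staircase,'' does not in fact describe the winning strategy in several cases: for example, in state 1 a move in row 3 must be answered in row 1 (reaching state 4), in state 2 a move in row 2 must be answered in row 1 (reaching state 6), and in states 5, 6, and 7 the correct response to a row-3 move depends on the \emph{parity} of the number of open cells in row 1, sending the game to different target states accordingly. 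So the pairing heuristic, if followed literally, would fail, and the claim that it ``forces each response to land back in $\widehat{S}$'' is unsupported. What remains to be done is not routine bookkeeping but the actual determination, case by case, of which responses work --- which is what the paper's proof supplies.
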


\begin{proof}
We proceed in cases by considering each of the seven states in $\widehat{S}$, each of player 2's options, and an appropriate response from player 1.

In State 1, player 2 has four possible moves, shown below, followed by player 1's responses.

\begin{center}
\begin{tabular}{cccc}
\scalebox{0.4}{\begin{tikzpicture}
\draw (0,0)--(1,0)--(1,3)--(2,3)--(2,4)--(0,4)--(0,0);
\draw (0,4)--(5,4)--(5,0)--(0,0);
\draw[step=1.0,black,thin] (0,0) grid (2,4);
\fill[gray] (0,0) rectangle (1,4);
\fill[gray] (1,3) rectangle (2,4);
\fill[black] (2,3) rectangle (3,4);
\node at (4,3.5) {\Huge even};
\end{tikzpicture}}
&
\scalebox{0.4}{\begin{tikzpicture}
\draw (0,0)--(1,0)--(1,3)--(2,3)--(2,4)--(0,4)--(0,0);
\draw (0,4)--(5,4)--(5,0)--(0,0);
\draw[step=1.0,black,thin] (0,0) grid (2,4);
\fill[gray] (0,0) rectangle (1,4);
\fill[gray] (1,3) rectangle (2,4);
\fill[black] (1,2) rectangle (2,3);
\node at (3,3.5) {\Huge odd};
\end{tikzpicture}}
&
\scalebox{0.4}{\begin{tikzpicture}
\draw (0,0)--(1,0)--(1,3)--(2,3)--(2,4)--(0,4)--(0,0);
\draw (0,4)--(5,4)--(5,0)--(0,0);
\draw[step=1.0,black,thin] (0,0) grid (2,4);
\fill[gray] (0,0) rectangle (1,4);
\fill[gray] (1,3) rectangle (2,4);
\fill[pattern={Hatch}, pattern color=black] (1,2) rectangle (2,3);
\fill[black] (1,1) rectangle (2,2);
\node at (3,3.5) {\Huge odd};
\end{tikzpicture}}
&
\scalebox{0.4}{\begin{tikzpicture}
\draw (0,0)--(1,0)--(1,3)--(2,3)--(2,4)--(0,4)--(0,0);
\draw (0,4)--(5,4)--(5,0)--(0,0);
\draw[step=1.0,black,thin] (0,0) grid (2,4);
\fill[gray] (0,0) rectangle (1,4);
\fill[gray] (1,3) rectangle (2,4);
\fill[pattern={Hatch}, pattern color=black] (1,1) rectangle (2,3);
\fill[black] (1,0) rectangle (2,1);
\node at (3,3.5) {\Huge odd};
\end{tikzpicture}}
\\
\scalebox{0.4}{\begin{tikzpicture}
\draw (0,0)--(1,0)--(1,3)--(2,3)--(2,4)--(0,4)--(0,0);
\draw (0,4)--(5,4)--(5,0)--(0,0);
\draw[step=1.0,black,thin] (0,0) grid (2,4);
\fill[gray] (0,0) rectangle (1,4);
\fill[gray] (1,3) rectangle (2,4);
\fill[black] (2,3) rectangle (3,4);
\fill[pattern={Hatch}, pattern color=blue] (1,2) rectangle (2,3);
\fill[blue] (2,2) rectangle (3,3);
\end{tikzpicture}}
&
\scalebox{0.4}{\begin{tikzpicture}
\draw (0,0)--(1,0)--(1,3)--(2,3)--(2,4)--(0,4)--(0,0);
\draw (0,4)--(5,4)--(5,0)--(0,0);
\draw[step=1.0,black,thin] (0,0) grid (2,4);
\fill[gray] (0,0) rectangle (1,4);
\fill[gray] (1,3) rectangle (2,4);
\fill[black] (1,2) rectangle (2,3);
\fill[pattern={Hatch}, pattern color=blue] (2,3) rectangle (3,4);
\fill[blue] (2,2) rectangle (3,3);
\end{tikzpicture}}
&
\scalebox{0.4}{\begin{tikzpicture}
\draw (0,0)--(1,0)--(1,3)--(2,3)--(2,4)--(0,4)--(0,0);
\draw (0,4)--(5,4)--(5,0)--(0,0);
\draw[step=1.0,black,thin] (0,0) grid (2,4);
\fill[gray] (0,0) rectangle (1,4);
\fill[gray] (1,3) rectangle (2,4);
\fill[pattern={Hatch}, pattern color=black] (1,2) rectangle (2,3);
\fill[black] (1,1) rectangle (2,2);
\fill[blue] (2,3) rectangle (3,4);
\node at (4,3.5) {\Huge even};
\end{tikzpicture}}
&
\scalebox{0.4}{\begin{tikzpicture}
\draw (0,0)--(1,0)--(1,3)--(2,3)--(2,4)--(0,4)--(0,0);
\draw (0,4)--(5,4)--(5,0)--(0,0);
\draw[step=1.0,black,thin] (0,0) grid (2,4);
\fill[gray] (0,0) rectangle (1,4);
\fill[gray] (1,3) rectangle (2,4);
\fill[pattern={Hatch}, pattern color=black] (1,1) rectangle (2,3);
\fill[black] (1,0) rectangle (2,1);
\fill[pattern={Hatch}, pattern color=blue] (2,2) rectangle (3,4);
\fill[blue] (2,1) rectangle (3,2);
\node at (4,3.5) {\Huge even};
\end{tikzpicture}}\\
State 5&State 5&State 4&State 2\\
\end{tabular}
\end{center}

In State 2, player 2 has four possible moves, shown below, followed by player 1's responses.
\begin{center}
\begin{tabular}{cccc}
\scalebox{0.4}{\begin{tikzpicture}
\draw (0,0)--(1,0)--(1,1)--(2,1)--(2,4)--(0,4)--(0,0);
\draw (0,4)--(5,4)--(5,0)--(0,0);
\draw[step=1.0,black,thin] (0,0) grid (2,4);
\fill[gray] (0,0) rectangle (1,4);
\fill[gray] (1,1) rectangle (2,4);
\fill[black] (2,3) rectangle (3,4);
\node at (4,3.5) {\Huge odd};
\end{tikzpicture}}&\scalebox{0.4}{\begin{tikzpicture}
\draw (0,0)--(1,0)--(1,1)--(2,1)--(2,4)--(0,4)--(0,0);
\draw (0,4)--(5,4)--(5,0)--(0,0);
\draw[step=1.0,black,thin] (0,0) grid (2,4);
\fill[gray] (0,0) rectangle (1,4);
\fill[gray] (1,1) rectangle (2,4);
\fill[pattern={Hatch}, pattern color=black] (2,3) rectangle (3,4);
\fill[black] (2,2) rectangle (3,3);
\node at (4,3.5) {\Huge odd};
\end{tikzpicture}}&\scalebox{0.4}{\begin{tikzpicture}
\draw (0,0)--(1,0)--(1,1)--(2,1)--(2,4)--(0,4)--(0,0);
\draw (0,4)--(5,4)--(5,0)--(0,0);
\draw[step=1.0,black,thin] (0,0) grid (2,4);
\fill[gray] (0,0) rectangle (1,4);
\fill[gray] (1,1) rectangle (2,4);
\fill[pattern={Hatch}, pattern color=black] (2,2) rectangle (3,4);
\fill[black] (2,1) rectangle (3,2);
\node at (4,3.5) {\Huge odd};
\end{tikzpicture}}&\scalebox{0.4}{\begin{tikzpicture}
\draw (0,0)--(1,0)--(1,1)--(2,1)--(2,4)--(0,4)--(0,0);
\draw (0,4)--(5,4)--(5,0)--(0,0);
\draw[step=1.0,black,thin] (0,0) grid (2,4);
\fill[gray] (0,0) rectangle (1,4);
\fill[gray] (1,1) rectangle (2,4);
\fill[black] (1,0) rectangle (2,1);
\node at (3,3.5) {\Huge even};
\end{tikzpicture}}\\
\scalebox{0.4}{\begin{tikzpicture}
\draw (0,0)--(1,0)--(1,1)--(2,1)--(2,4)--(0,4)--(0,0);
\draw (0,4)--(5,4)--(5,0)--(0,0);
\draw[step=1.0,black,thin] (0,0) grid (2,4);
\fill[gray] (0,0) rectangle (1,4);
\fill[gray] (1,1) rectangle (2,4);
\fill[black] (2,3) rectangle (3,4);
\fill[blue] (1,0) rectangle (2,1);
\node at (4,3.5) {\Huge odd};
\end{tikzpicture}}&\scalebox{0.4}{\begin{tikzpicture}
\draw (0,0)--(1,0)--(1,1)--(2,1)--(2,4)--(0,4)--(0,0);
\draw (0,4)--(5,4)--(5,0)--(0,0);
\draw[step=1.0,black,thin] (0,0) grid (2,4);
\fill[gray] (0,0) rectangle (1,4);
\fill[gray] (1,1) rectangle (2,4);
\fill[pattern={Hatch}, pattern color=black] (2,3) rectangle (3,4);
\fill[black] (2,2) rectangle (3,3);
\fill[blue] (3,3) rectangle (4,4);
\end{tikzpicture}}&\scalebox{0.4}{\begin{tikzpicture}
\draw (0,0)--(1,0)--(1,1)--(2,1)--(2,4)--(0,4)--(0,0);
\draw (0,4)--(6,4)--(6,0)--(0,0);
\draw[step=1.0,black,thin] (0,0) grid (2,4);
\fill[gray] (0,0) rectangle (1,4);
\fill[gray] (1,1) rectangle (2,4);
\fill[pattern={Hatch}, pattern color=black] (2,2) rectangle (3,4);
\fill[black] (2,1) rectangle (3,2);
\fill[pattern={Hatch}, pattern color=blue] (3,3) rectangle (4,4);
\fill[blue] (3,2) rectangle (4,3);
\node at (5,3.5) {\Huge even};
\end{tikzpicture}}&\scalebox{0.4}{\begin{tikzpicture}
\draw (0,0)--(1,0)--(1,1)--(2,1)--(2,4)--(0,4)--(0,0);
\draw (0,4)--(5,4)--(5,0)--(0,0);
\draw[step=1.0,black,thin] (0,0) grid (2,4);
\fill[gray] (0,0) rectangle (1,4);
\fill[gray] (1,1) rectangle (2,4);
\fill[black] (1,0) rectangle (2,1);
\fill[blue] (2,3) rectangle (3,4);
\node at (4,3.5) {\Huge odd};
\end{tikzpicture}}\\
State 1& State 6 & State 3&State 1
\end{tabular}
\end{center}

In State 3, player 2 has five possible moves, shown below, followed by player 1's responses.

\begin{center}
\begin{tabular}{ccccc}
\scalebox{0.28}{\begin{tikzpicture}
\draw (0,0)--(1,0)--(1,1)--(3,1)--(3,2)--(4,2)--(4,4)--(0,4)--(0,0);
\draw (0,4)--(7,4)--(7,0)--(0,0);
\draw[step=1.0,black,thin] (0,0) grid (4,4);
\fill[gray] (0,0) rectangle (1,4);
\fill[gray] (1,1) rectangle (3,4);
\fill[gray] (3,2) rectangle (4,4);
\fill[black] (4,3) rectangle (5,4);
\node at (6,3.5) {\Huge odd};
\end{tikzpicture}}&\scalebox{0.28}{\begin{tikzpicture}
\draw (0,0)--(1,0)--(1,1)--(3,1)--(3,2)--(4,2)--(4,4)--(0,4)--(0,0);
\draw (0,4)--(7,4)--(7,0)--(0,0);
\draw[step=1.0,black,thin] (0,0) grid (4,4);
\fill[gray] (0,0) rectangle (1,4);
\fill[gray] (1,1) rectangle (3,4);
\fill[gray] (3,2) rectangle (4,4);
\fill[pattern={Hatch}, pattern color=black] (4,3) rectangle (5,4);
\fill[black] (4,2) rectangle (5,3);
\node at (6,3.5) {\Huge odd};
\end{tikzpicture}}&\scalebox{0.28}{\begin{tikzpicture}
\draw (0,0)--(1,0)--(1,1)--(3,1)--(3,2)--(4,2)--(4,4)--(0,4)--(0,0);
\draw (0,4)--(7,4)--(7,0)--(0,0);
\draw[step=1.0,black,thin] (0,0) grid (4,4);
\fill[gray] (0,0) rectangle (1,4);
\fill[gray] (1,1) rectangle (3,4);
\fill[gray] (3,2) rectangle (4,4);
\fill[black] (3,1) rectangle (4,2);
\node at (5,3.5) {\Huge even};
\end{tikzpicture}}&\scalebox{0.28}{\begin{tikzpicture}
\draw (0,0)--(1,0)--(1,1)--(3,1)--(3,2)--(4,2)--(4,4)--(0,4)--(0,0);
\draw (0,4)--(7,4)--(7,0)--(0,0);
\draw[step=1.0,black,thin] (0,0) grid (4,4);
\fill[gray] (0,0) rectangle (1,4);
\fill[gray] (1,1) rectangle (3,4);
\fill[gray] (3,2) rectangle (4,4);
\fill[pattern={Hatch}, pattern color=black] (1,0) rectangle (2,1);
\fill[black] (2,0) rectangle (3,1);
\node at (5,3.5) {\Huge even};
\end{tikzpicture}}&\scalebox{0.28}{\begin{tikzpicture}
\draw (0,0)--(1,0)--(1,1)--(3,1)--(3,2)--(4,2)--(4,4)--(0,4)--(0,0);
\draw (0,4)--(7,4)--(7,0)--(0,0);
\draw[step=1.0,black,thin] (0,0) grid (4,4);
\fill[gray] (0,0) rectangle (1,4);
\fill[gray] (1,1) rectangle (3,4);
\fill[gray] (3,2) rectangle (4,4);
\fill[black] (1,0) rectangle (2,1);
\node at (5,3.5) {\Huge even};
\end{tikzpicture}}\\
\scalebox{0.28}{\begin{tikzpicture}
\draw (0,0)--(1,0)--(1,1)--(3,1)--(3,2)--(4,2)--(4,4)--(0,4)--(0,0);
\draw (0,4)--(7,4)--(7,0)--(0,0);
\draw[step=1.0,black,thin] (0,0) grid (4,4);
\fill[gray] (0,0) rectangle (1,4);
\fill[gray] (1,1) rectangle (3,4);
\fill[gray] (3,2) rectangle (4,4);
\fill[black] (4,3) rectangle (5,4);
\fill[blue] (1,0) rectangle (2,1);
\end{tikzpicture}}&\scalebox{0.28}{\begin{tikzpicture}
\draw (0,0)--(1,0)--(1,1)--(3,1)--(3,2)--(4,2)--(4,4)--(0,4)--(0,0);
\draw (0,4)--(7,4)--(7,0)--(0,0);
\draw[step=1.0,black,thin] (0,0) grid (4,4);
\fill[gray] (0,0) rectangle (1,4);
\fill[gray] (1,1) rectangle (3,4);
\fill[gray] (3,2) rectangle (4,4);
\fill[pattern={Hatch}, pattern color=black] (4,3) rectangle (5,4);
\fill[black] (4,2) rectangle (5,3);
\fill[pattern={Hatch}, pattern color=blue] (1,0) rectangle (2,1);
\fill[blue] (2,0) rectangle (3,1);
\end{tikzpicture}}&\scalebox{0.28}{\begin{tikzpicture}
\draw (0,0)--(1,0)--(1,1)--(3,1)--(3,2)--(4,2)--(4,4)--(0,4)--(0,0);
\draw (0,4)--(7,4)--(7,0)--(0,0);
\draw[step=1.0,black,thin] (0,0) grid (4,4);
\fill[gray] (0,0) rectangle (1,4);
\fill[gray] (1,1) rectangle (3,4);
\fill[gray] (3,2) rectangle (4,4);
\fill[black] (3,1) rectangle (4,2);
\fill[pattern={Hatch}, pattern color=blue] (1,0) rectangle (2,1);
\fill[blue] (2,0) rectangle (3,1);
\node at (5,3.5) {\Huge even};
\end{tikzpicture}}&\scalebox{0.28}{\begin{tikzpicture}
\draw (0,0)--(1,0)--(1,1)--(3,1)--(3,2)--(4,2)--(4,4)--(0,4)--(0,0);
\draw (0,4)--(7,4)--(7,0)--(0,0);
\draw[step=1.0,black,thin] (0,0) grid (4,4);
\fill[gray] (0,0) rectangle (1,4);
\fill[gray] (1,1) rectangle (3,4);
\fill[gray] (3,2) rectangle (4,4);
\fill[pattern={Hatch}, pattern color=black] (1,0) rectangle (2,1);
\fill[black] (2,0) rectangle (3,1);
\fill[pattern={Hatch}, pattern color=blue] (4,3) rectangle (5,4);
\fill[blue] (4,2) rectangle (5,3);
\end{tikzpicture}}&\scalebox{0.28}{\begin{tikzpicture}
\draw (0,0)--(1,0)--(1,1)--(3,1)--(3,2)--(4,2)--(4,4)--(0,4)--(0,0);
\draw (0,4)--(7,4)--(7,0)--(0,0);
\draw[step=1.0,black,thin] (0,0) grid (4,4);
\fill[gray] (0,0) rectangle (1,4);
\fill[gray] (1,1) rectangle (3,4);
\fill[gray] (3,2) rectangle (4,4);
\fill[black] (1,0) rectangle (2,1);
\fill[blue] (4,3) rectangle (5,4);
\end{tikzpicture}}\\
State 6&State 5&State 2&State 5&State 6\\
\end{tabular}
\end{center}

In State 4, player 2 has four possible moves, shown below, followed by player 1's responses.

\begin{center}
\begin{tabular}{cccc}
\scalebox{0.4}{\begin{tikzpicture}
\draw (0,0)--(1,0)--(1,1)--(2,1)--(2,3)--(3,3)--(3,4)--(0,4)--(0,0);
\draw (0,4)--(6,4)--(6,0)--(0,0);
\draw[step=1.0,black,thin] (0,0) grid (3,4);
\fill[gray] (0,0) rectangle (1,4);
\fill[gray] (1,1) rectangle (2,4);
\fill[gray] (2,3) rectangle (3,4);
\fill[black] (3,3) rectangle (4,4);
\node at (5,3.5) {\Huge odd};
\end{tikzpicture}} &\scalebox{0.4}{\begin{tikzpicture}
\draw (0,0)--(1,0)--(1,1)--(2,1)--(2,3)--(3,3)--(3,4)--(0,4)--(0,0);
\draw (0,4)--(6,4)--(6,0)--(0,0);
\draw[step=1.0,black,thin] (0,0) grid (3,4);
\fill[gray] (0,0) rectangle (1,4);
\fill[gray] (1,1) rectangle (2,4);
\fill[gray] (2,3) rectangle (3,4);
\fill[black] (2,2) rectangle (3,3);
\node at (4,3.5) {\Huge even};
\end{tikzpicture}} &\scalebox{0.4}{\begin{tikzpicture}
\draw (0,0)--(1,0)--(1,1)--(2,1)--(2,3)--(3,3)--(3,4)--(0,4)--(0,0);
\draw (0,4)--(6,4)--(6,0)--(0,0);
\draw[step=1.0,black,thin] (0,0) grid (3,4);
\fill[gray] (0,0) rectangle (1,4);
\fill[gray] (1,1) rectangle (2,4);
\fill[gray] (2,3) rectangle (3,4);
\fill[pattern={Hatch}, pattern color=black] (2,2) rectangle (3,3);
\fill[black] (2,1) rectangle (3,2);
\node at (4,3.5) {\Huge even};
\end{tikzpicture}} &\scalebox{0.4}{\begin{tikzpicture}
\draw (0,0)--(1,0)--(1,1)--(2,1)--(2,3)--(3,3)--(3,4)--(0,4)--(0,0);
\draw (0,4)--(6,4)--(6,0)--(0,0);
\draw[step=1.0,black,thin] (0,0) grid (3,4);
\fill[gray] (0,0) rectangle (1,4);
\fill[gray] (1,1) rectangle (2,4);
\fill[gray] (2,3) rectangle (3,4);
\fill[black] (1,0) rectangle (2,1);
\node at (4,3.5) {\Huge even};
\end{tikzpicture}} \\
\scalebox{0.4}{\begin{tikzpicture}
\draw (0,0)--(1,0)--(1,1)--(2,1)--(2,3)--(3,3)--(3,4)--(0,4)--(0,0);
\draw (0,4)--(6,4)--(6,0)--(0,0);
\draw[step=1.0,black,thin] (0,0) grid (3,4);
\fill[gray] (0,0) rectangle (1,4);
\fill[gray] (1,1) rectangle (2,4);
\fill[gray] (2,3) rectangle (3,4);
\fill[black] (3,3) rectangle (4,4);
\fill[blue] (2,2) rectangle (3,3);
\end{tikzpicture}} &\scalebox{0.4}{\begin{tikzpicture}
\draw (0,0)--(1,0)--(1,1)--(2,1)--(2,3)--(3,3)--(3,4)--(0,4)--(0,0);
\draw (0,4)--(6,4)--(6,0)--(0,0);
\draw[step=1.0,black,thin] (0,0) grid (3,4);
\fill[gray] (0,0) rectangle (1,4);
\fill[gray] (1,1) rectangle (2,4);
\fill[gray] (2,3) rectangle (3,4);
\fill[black] (2,2) rectangle (3,3);
\fill[blue] (3,3) rectangle (4,4);
\end{tikzpicture}} &\scalebox{0.4}{\begin{tikzpicture}
\draw (0,0)--(1,0)--(1,1)--(2,1)--(2,3)--(3,3)--(3,4)--(0,4)--(0,0);
\draw (0,4)--(6,4)--(6,0)--(0,0);
\draw[step=1.0,black,thin] (0,0) grid (3,4);
\fill[gray] (0,0) rectangle (1,4);
\fill[gray] (1,1) rectangle (2,4);
\fill[gray] (2,3) rectangle (3,4);
\fill[pattern={Hatch}, pattern color=black] (2,2) rectangle (3,3);
\fill[black] (2,1) rectangle (3,2);
\fill[blue] (1,0) rectangle (2,1);
\node at (4,3.5) {\Huge even};
\end{tikzpicture}} &\scalebox{0.4}{\begin{tikzpicture}
\draw (0,0)--(1,0)--(1,1)--(2,1)--(2,3)--(3,3)--(3,4)--(0,4)--(0,0);
\draw (0,4)--(6,4)--(6,0)--(0,0);
\draw[step=1.0,black,thin] (0,0) grid (3,4);
\fill[gray] (0,0) rectangle (1,4);
\fill[gray] (1,1) rectangle (2,4);
\fill[gray] (2,3) rectangle (3,4);
\fill[black] (1,0) rectangle (2,1);
\fill[pattern={Hatch}, pattern color=blue] (2,2) rectangle (3,3);
\fill[blue] (2,1) rectangle (3,2);
\node at (4,3.5) {\Huge even};
\end{tikzpicture}} \\
State 6&State 6&State 2&State 2\\
\end{tabular}
\end{center}

In State 5, player 2 has five possible moves, shown below, followed by player 1's responses. In one of these cases, player 1's response depends on the parity of the number of open cells in row 1.

\begin{center}
\begin{tabular}{cccccc}
\scalebox{0.28}{\begin{tikzpicture}
\draw (0,0)--(1,0)--(1,2)--(3,2)--(3,4)--(0,4)--(0,0);
\draw (0,4)--(5,4)--(5,0)--(0,0);
\draw[step=1.0,black,thin] (0,0) grid (3,4);
\fill[gray] (0,0) rectangle (1,4);
\fill[gray] (1,2) rectangle (2,4);
\fill[gray] (2,2) rectangle (3,4);
\fill[black] (3,3) rectangle (4,4);
\end{tikzpicture}}&
\scalebox{0.28}{\begin{tikzpicture}
\draw (0,0)--(1,0)--(1,2)--(3,2)--(3,4)--(0,4)--(0,0);
\draw (0,4)--(5,4)--(5,0)--(0,0);
\draw[step=1.0,black,thin] (0,0) grid (3,4);
\fill[gray] (0,0) rectangle (1,4);
\fill[gray] (1,2) rectangle (2,4);
\fill[gray] (2,2) rectangle (3,4);
\fill[pattern={Hatch}, pattern color=black] (3,3) rectangle (4,4);
\fill[black] (3,2) rectangle (4,3);
\end{tikzpicture}}&\scalebox{0.28}{\begin{tikzpicture}
\draw (0,0)--(1,0)--(1,2)--(3,2)--(3,4)--(0,4)--(0,0);
\draw (0,4)--(5,4)--(5,0)--(0,0);
\draw[step=1.0,black,thin] (0,0) grid (3,4);
\fill[gray] (0,0) rectangle (1,4);
\fill[gray] (1,2) rectangle (2,4);
\fill[gray] (2,2) rectangle (3,4);
\fill[pattern={Hatch}, pattern color=black] (1,1) rectangle (2,2);
\fill[black] (2,1) rectangle (3,2);
\node at (4,3.5) {\Huge odd};
\end{tikzpicture}}&\scalebox{0.28}{\begin{tikzpicture}
\draw (0,0)--(1,0)--(1,2)--(3,2)--(3,4)--(0,4)--(0,0);
\draw (0,4)--(5,4)--(5,0)--(0,0);
\draw[step=1.0,black,thin] (0,0) grid (3,4);
\fill[gray] (0,0) rectangle (1,4);
\fill[gray] (1,2) rectangle (2,4);
\fill[gray] (2,2) rectangle (3,4);
\fill[pattern={Hatch}, pattern color=black] (1,1) rectangle (2,2);
\fill[black] (2,1) rectangle (3,2);
\node at (4,3.5) {\Huge even};
\end{tikzpicture}}&\scalebox{0.28}{\begin{tikzpicture}
\draw (0,0)--(1,0)--(1,2)--(3,2)--(3,4)--(0,4)--(0,0);
\draw (0,4)--(5,4)--(5,0)--(0,0);
\draw[step=1.0,black,thin] (0,0) grid (3,4);
\fill[gray] (0,0) rectangle (1,4);
\fill[gray] (1,2) rectangle (2,4);
\fill[gray] (2,2) rectangle (3,4);
\fill[black] (1,1) rectangle (2,2);
\end{tikzpicture}}&\scalebox{0.28}{\begin{tikzpicture}
\draw (0,0)--(1,0)--(1,2)--(3,2)--(3,4)--(0,4)--(0,0);
\draw (0,4)--(5,4)--(5,0)--(0,0);
\draw[step=1.0,black,thin] (0,0) grid (3,4);
\fill[gray] (0,0) rectangle (1,4);
\fill[gray] (1,2) rectangle (2,4);
\fill[gray] (2,2) rectangle (3,4);
\fill[pattern={Hatch}, pattern color=black] (1,1) rectangle (2,2);
\fill[black] (1,0) rectangle (2,1);
\end{tikzpicture}}\\
\scalebox{0.28}{\begin{tikzpicture}
\draw (0,0)--(1,0)--(1,2)--(3,2)--(3,4)--(0,4)--(0,0);
\draw (0,4)--(5,4)--(5,0)--(0,0);
\draw[step=1.0,black,thin] (0,0) grid (3,4);
\fill[gray] (0,0) rectangle (1,4);
\fill[gray] (1,2) rectangle (2,4);
\fill[gray] (2,2) rectangle (3,4);
\fill[black] (3,3) rectangle (4,4);
\fill[blue] (1,1) rectangle (2,2);
\end{tikzpicture}}&
\scalebox{0.28}{\begin{tikzpicture}
\draw (0,0)--(1,0)--(1,2)--(3,2)--(3,4)--(0,4)--(0,0);
\draw (0,4)--(5,4)--(5,0)--(0,0);
\draw[step=1.0,black,thin] (0,0) grid (3,4);
\fill[gray] (0,0) rectangle (1,4);
\fill[gray] (1,2) rectangle (2,4);
\fill[gray] (2,2) rectangle (3,4);
\fill[pattern={Hatch}, pattern color=black] (3,3) rectangle (4,4);
\fill[black] (3,2) rectangle (4,3);
\fill[pattern={Hatch}, pattern color=blue] (1,1) rectangle (2,2);
\fill[blue] (1,0) rectangle (2,1);
\end{tikzpicture}}&\scalebox{0.28}{\begin{tikzpicture}
\draw (0,0)--(1,0)--(1,2)--(3,2)--(3,4)--(0,4)--(0,0);
\draw (0,4)--(6,4)--(6,0)--(0,0);
\draw[step=1.0,black,thin] (0,0) grid (3,4);
\fill[gray] (0,0) rectangle (1,4);
\fill[gray] (1,2) rectangle (2,4);
\fill[gray] (2,2) rectangle (3,4);
\fill[pattern={Hatch}, pattern color=black] (1,1) rectangle (2,2);
\fill[black] (2,1) rectangle (3,2);
\fill[pattern={Hatch}, pattern color=blue] (3,3) rectangle (4,4);
\fill[blue] (3,2) rectangle (4,3);
\node at (5,3.5) {\Huge even};
\end{tikzpicture}}&\scalebox{0.28}{\begin{tikzpicture}
\draw (0,0)--(1,0)--(1,2)--(3,2)--(3,4)--(0,4)--(0,0);
\draw (0,4)--(5,4)--(5,0)--(0,0);
\draw[step=1.0,black,thin] (0,0) grid (3,4);
\fill[gray] (0,0) rectangle (1,4);
\fill[gray] (1,2) rectangle (2,4);
\fill[gray] (2,2) rectangle (3,4);
\fill[pattern={Hatch}, pattern color=black] (1,1) rectangle (2,2);
\fill[black] (2,1) rectangle (3,2);
\fill[blue] (1,0) rectangle (2,1);
\node at (4,3.5) {\Huge even};
\end{tikzpicture}}&\scalebox{0.28}{\begin{tikzpicture}
\draw (0,0)--(1,0)--(1,2)--(3,2)--(3,4)--(0,4)--(0,0);
\draw (0,4)--(5,4)--(5,0)--(0,0);
\draw[step=1.0,black,thin] (0,0) grid (3,4);
\fill[gray] (0,0) rectangle (1,4);
\fill[gray] (1,2) rectangle (2,4);
\fill[gray] (2,2) rectangle (3,4);
\fill[black] (1,1) rectangle (2,2);
\fill[blue] (3,3) rectangle (4,4);
\end{tikzpicture}}&\scalebox{0.28}{\begin{tikzpicture}
\draw (0,0)--(1,0)--(1,2)--(3,2)--(3,4)--(0,4)--(0,0);
\draw (0,4)--(5,4)--(5,0)--(0,0);
\draw[step=1.0,black,thin] (0,0) grid (3,4);
\fill[gray] (0,0) rectangle (1,4);
\fill[gray] (1,2) rectangle (2,4);
\fill[gray] (2,2) rectangle (3,4);
\fill[pattern={Hatch}, pattern color=black] (1,1) rectangle (2,2);
\fill[black] (1,0) rectangle (2,1);
\fill[pattern={Hatch}, pattern color=blue] (3,3) rectangle (4,4);
\fill[blue] (3,2) rectangle (4,3);
\end{tikzpicture}}\\
State 6&State 5&State 3&State 2&State 6&State 5\\
\end{tabular}
\end{center}

In State 6, player 2 has four possible moves, shown below, followed by player 1's responses. In one of these cases, player 1's response depends on the parity of the number of open cells in row 1.

\begin{center}
\begin{tabular}{ccccc}
\scalebox{0.28}{\begin{tikzpicture}
\draw (0,0)--(1,0)--(1,1)--(2,1)--(2,2)--(3,2)--(3,3)--(4,3)--(4,4)--(0,4)--(0,0);
\draw (0,4)--(6,4)--(6,0)--(0,0);
\draw[step=1.0,black,thin] (0,0) grid (4,4);
\fill[gray] (0,0) rectangle (1,4);
\fill[gray] (1,1) rectangle (2,4);
\fill[gray] (2,2) rectangle (3,4);
\fill[gray] (3,3) rectangle (4,4);
\fill[black] (4,3) rectangle (5,4);
\end{tikzpicture}}&\scalebox{0.28}{\begin{tikzpicture}
\draw (0,0)--(1,0)--(1,1)--(2,1)--(2,2)--(3,2)--(3,3)--(4,3)--(4,4)--(0,4)--(0,0);
\draw (0,4)--(6,4)--(6,0)--(0,0);
\draw[step=1.0,black,thin] (0,0) grid (4,4);
\fill[gray] (0,0) rectangle (1,4);
\fill[gray] (1,1) rectangle (2,4);
\fill[gray] (2,2) rectangle (3,4);
\fill[gray] (3,3) rectangle (4,4);
\fill[black] (3,2) rectangle (4,3);
\end{tikzpicture}}&\scalebox{0.28}{\begin{tikzpicture}
\draw (0,0)--(1,0)--(1,1)--(2,1)--(2,2)--(3,2)--(3,3)--(4,3)--(4,4)--(0,4)--(0,0);
\draw (0,4)--(6,4)--(6,0)--(0,0);
\draw[step=1.0,black,thin] (0,0) grid (4,4);
\fill[gray] (0,0) rectangle (1,4);
\fill[gray] (1,1) rectangle (2,4);
\fill[gray] (2,2) rectangle (3,4);
\fill[gray] (3,3) rectangle (4,4);
\fill[black] (2,1) rectangle (3,2);
\node at (5,3.5) {\Huge even};
\end{tikzpicture}}&\scalebox{0.28}{\begin{tikzpicture}
\draw (0,0)--(1,0)--(1,1)--(2,1)--(2,2)--(3,2)--(3,3)--(4,3)--(4,4)--(0,4)--(0,0);
\draw (0,4)--(6,4)--(6,0)--(0,0);
\draw[step=1.0,black,thin] (0,0) grid (4,4);
\fill[gray] (0,0) rectangle (1,4);
\fill[gray] (1,1) rectangle (2,4);
\fill[gray] (2,2) rectangle (3,4);
\fill[gray] (3,3) rectangle (4,4);
\fill[black] (2,1) rectangle (3,2);
\node at (5,3.5) {\Huge odd};
\end{tikzpicture}}&\scalebox{0.28}{\begin{tikzpicture}
\draw (0,0)--(1,0)--(1,1)--(2,1)--(2,2)--(3,2)--(3,3)--(4,3)--(4,4)--(0,4)--(0,0);
\draw (0,4)--(6,4)--(6,0)--(0,0);
\draw[step=1.0,black,thin] (0,0) grid (4,4);
\fill[gray] (0,0) rectangle (1,4);
\fill[gray] (1,1) rectangle (2,4);
\fill[gray] (2,2) rectangle (3,4);
\fill[gray] (3,3) rectangle (4,4);
\fill[black] (1,0) rectangle (2,1);
\end{tikzpicture}}\\
\scalebox{0.28}{\begin{tikzpicture}
\draw (0,0)--(1,0)--(1,1)--(2,1)--(2,2)--(3,2)--(3,3)--(4,3)--(4,4)--(0,4)--(0,0);
\draw (0,4)--(6,4)--(6,0)--(0,0);
\draw[step=1.0,black,thin] (0,0) grid (4,4);
\fill[gray] (0,0) rectangle (1,4);
\fill[gray] (1,1) rectangle (2,4);
\fill[gray] (2,2) rectangle (3,4);
\fill[gray] (3,3) rectangle (4,4);
\fill[black] (4,3) rectangle (5,4);
\fill[blue] (3,2) rectangle (4,3);
\end{tikzpicture}}&\scalebox{0.28}{\begin{tikzpicture}
\draw (0,0)--(1,0)--(1,1)--(2,1)--(2,2)--(3,2)--(3,3)--(4,3)--(4,4)--(0,4)--(0,0);
\draw (0,4)--(6,4)--(6,0)--(0,0);
\draw[step=1.0,black,thin] (0,0) grid (4,4);
\fill[gray] (0,0) rectangle (1,4);
\fill[gray] (1,1) rectangle (2,4);
\fill[gray] (2,2) rectangle (3,4);
\fill[gray] (3,3) rectangle (4,4);
\fill[black] (3,2) rectangle (4,3);
\fill[blue] (1,0) rectangle (2,1);
\end{tikzpicture}}&\scalebox{0.28}{\begin{tikzpicture}
\draw (0,0)--(1,0)--(1,1)--(2,1)--(2,2)--(3,2)--(3,3)--(4,3)--(4,4)--(0,4)--(0,0);
\draw (0,4)--(6,4)--(6,0)--(0,0);
\draw[step=1.0,black,thin] (0,0) grid (4,4);
\fill[gray] (0,0) rectangle (1,4);
\fill[gray] (1,1) rectangle (2,4);
\fill[gray] (2,2) rectangle (3,4);
\fill[gray] (3,3) rectangle (4,4);
\fill[black] (2,1) rectangle (3,2);
\fill[blue] (3,2) rectangle (4,3);
\node at (5,3.5) {\Huge even};
\end{tikzpicture}}&\scalebox{0.28}{\begin{tikzpicture}
\draw (0,0)--(1,0)--(1,1)--(2,1)--(2,2)--(3,2)--(3,3)--(4,3)--(4,4)--(0,4)--(0,0);
\draw (0,4)--(6,4)--(6,0)--(0,0);
\draw[step=1.0,black,thin] (0,0) grid (4,4);
\fill[gray] (0,0) rectangle (1,4);
\fill[gray] (1,1) rectangle (2,4);
\fill[gray] (2,2) rectangle (3,4);
\fill[gray] (3,3) rectangle (4,4);
\fill[black] (2,1) rectangle (3,2);
\fill[pattern={Hatch}, pattern color=blue] (1,0) rectangle (2,1);
\fill[blue] (2,0) rectangle (3,1);
\node at (5,3.5) {\Huge odd};
\end{tikzpicture}}&\scalebox{0.28}{\begin{tikzpicture}
\draw (0,0)--(1,0)--(1,1)--(2,1)--(2,2)--(3,2)--(3,3)--(4,3)--(4,4)--(0,4)--(0,0);
\draw (0,4)--(6,4)--(6,0)--(0,0);
\draw[step=1.0,black,thin] (0,0) grid (4,4);
\fill[gray] (0,0) rectangle (1,4);
\fill[gray] (1,1) rectangle (2,4);
\fill[gray] (2,2) rectangle (3,4);
\fill[gray] (3,3) rectangle (4,4);
\fill[black] (1,0) rectangle (2,1);
\fill[blue] (3,2) rectangle (4,3);
\end{tikzpicture}}\\
State 7&State 5&State 3&State 1&State 5\\
\end{tabular}
\end{center}

Finally, recall that in State 7, row 1 has one more eliminated cell than row 2, and row 3 has one more eliminated cell than row 4, but row 3 has at least two more eliminated cells than row 3, as illustrated below.

\begin{center}
\scalebox{0.4}{\begin{tikzpicture}
\draw (0,0)--(1,0)--(1,1)--(2,1)--(2,2)--(5,2)--(5,3)--(6,3)--(6,4)--(0,4)--(0,0);
\draw (0,4)--(8,4)--(8,0)--(0,0);
\draw[step=1.0,black,thin] (0,0) grid (6,4);
\fill[white] (3.1,0.1) rectangle (3.9,3.9);
\fill[gray] (4,3) rectangle (6,4);
\fill[gray] (4,2) rectangle (5,3);
\fill[gray] (0,3) rectangle (3,4);
\fill[gray] (0,2) rectangle (3,3);
\fill[gray] (0,1) rectangle (2,2);
\fill[gray] (0,0) rectangle (1,1);
\node at (3.5,3) {$\cdots$};
\end{tikzpicture}}
\end{center}

In general, if player 2 plays in row 1, player 1 responds in row 2 and vice versa to remain in State 7. Similarly, if player 2 plays in row 4, player 1 responds with one cell in row 3 to get to State 6 or State 7. The interesting situation is when player 2 plays in row 3, since there is more than one open cell in row 3 to choose from.  If player 2 plays anywhere but rightmost possible cell of row 3, player 1 plays in row 4 in such a way as to return to State 6 or 7.  The remaining case is when player 2 plays the final possible cell in row 3, which we consider in two cases below, depending on the parity of open cells in row 1.

\begin{center}
\begin{tabular}{cc}
\scalebox{0.4}{\begin{tikzpicture}
\draw (0,0)--(1,0)--(1,1)--(2,1)--(2,2)--(5,2)--(5,3)--(6,3)--(6,4)--(0,4)--(0,0);
\draw (0,4)--(8,4)--(8,0)--(0,0);
\draw[step=1.0,black,thin] (0,0) grid (6,4);
\fill[white] (3.1,0.1) rectangle (3.9,3.9);
\fill[gray] (4,3) rectangle (6,4);
\fill[gray] (4,2) rectangle (5,3);
\fill[gray] (0,3) rectangle (3,4);
\fill[gray] (0,2) rectangle (3,3);
\fill[gray] (0,1) rectangle (2,2);
\fill[gray] (0,0) rectangle (1,1);
\fill[pattern={Hatch}, pattern color=black] (2,1) rectangle (3,2);
\fill[black] (4,1) rectangle (5,2);
\node at (7,3.5) {\Huge odd};
\node at (3.5,3) {$\cdots$};
\end{tikzpicture}}&\scalebox{0.4}{\begin{tikzpicture}
\draw (0,0)--(1,0)--(1,1)--(2,1)--(2,2)--(5,2)--(5,3)--(6,3)--(6,4)--(0,4)--(0,0);
\draw (0,4)--(8,4)--(8,0)--(0,0);
\draw[step=1.0,black,thin] (0,0) grid (6,4);
\fill[white] (3.1,0.1) rectangle (3.9,3.9);
\fill[gray] (4,3) rectangle (6,4);
\fill[gray] (4,2) rectangle (5,3);
\fill[gray] (0,3) rectangle (3,4);
\fill[gray] (0,2) rectangle (3,3);
\fill[gray] (0,1) rectangle (2,2);
\fill[gray] (0,0) rectangle (1,1);
\fill[pattern={Hatch}, pattern color=black] (2,1) rectangle (3,2);
\fill[black] (4,1) rectangle (5,2);
\node at (7,3.5) {\Huge even};
\node at (3.5,3) {$\cdots$};
\end{tikzpicture}}\\
\scalebox{0.4}{\begin{tikzpicture}
\draw (0,0)--(1,0)--(1,1)--(2,1)--(2,2)--(5,2)--(5,3)--(6,3)--(6,4)--(0,4)--(0,0);
\draw (0,4)--(8,4)--(8,0)--(0,0);
\draw[step=1.0,black,thin] (0,0) grid (6,4);
\fill[white] (3.1,0.1) rectangle (3.9,3.9);
\fill[gray] (0,3) rectangle (6,4);
\fill[gray] (0,2) rectangle (5,3);
\fill[gray] (0,1) rectangle (2,2);
\fill[gray] (0,0) rectangle (1,1);
\fill[pattern={Hatch}, pattern color=black] (2,1) rectangle (4,2);
\fill[black] (4,1) rectangle (5,2);
\fill[pattern={Hatch}, pattern color=blue] (1,0) rectangle (4,1);
\fill[blue] (4,0) rectangle (5,1);
\node at (7,3.5) {\Huge odd};
\node at (3.5,3) {$\cdots$};
\end{tikzpicture}}&\scalebox{0.4}{\begin{tikzpicture}
\draw (0,0)--(1,0)--(1,1)--(2,1)--(2,2)--(5,2)--(5,3)--(6,3)--(6,4)--(0,4)--(0,0);
\draw (0,4)--(8,4)--(8,0)--(0,0);
\draw[step=1.0,black,thin] (0,0) grid (6,4);
\fill[white] (3.1,0.1) rectangle (3.9,3.9);
\fill[gray] (0,3) rectangle (6,4);
\fill[gray] (0,2) rectangle (5,3);
\fill[gray] (0,1) rectangle (2,2);
\fill[gray] (0,0) rectangle (1,1);
\fill[pattern={Hatch}, pattern color=black] (2,1) rectangle (3,2);
\fill[black] (4,1) rectangle (5,2);
\fill[pattern={Hatch}, pattern color=blue] (1,0) rectangle (3,1);
\fill[blue] (3,0) rectangle (4,1);
\node at (7,3.5) {\Huge even};
\node at (3.5,3) {$\cdots$};
\end{tikzpicture}}\\
State 1&State 4\\
\end{tabular}
\end{center}
\end{proof}

\begin{lemma}\label{L:end}
If the board is in a state from $\widehat{S}$ with non-zero open cells in row 1, no matter what move player 2 makes, player 1 has a response to turn the board to a state in $\widehat{E}$.
\end{lemma}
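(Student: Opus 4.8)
The plan is to prove Lemma~\ref{L:end} by the same exhaustive case analysis already used for Lemma~\ref{L:midgame}: walk through each of the seven families in $\widehat{S}$, list the moves available to player 2 from a board in that family, and in each line exhibit a single reply for player 1 whose result lies in $\widehat{E}$. To keep the bookkeeping manageable I would first fix an encoding, recording for a board only the number $o_i$ of open cells in row $i$ (equivalently the number $s_i$ of shaded-or-eliminated cells, since $o_i+s_i=a-1$) together with the parity flags that separate the states of $\widehat{S}$. By Proposition~\ref{P:legalmoves} the shaded-and-eliminated region is always closed under moving left and up, so its complement — the open region — is a staircase closed under moving right and down; hence the tuple $(o_1,o_2,o_3,o_4)$ with the relative offsets of the rows pins down the shape up to the harmless ambiguity noted after the Game Rules, and player 2's legal moves are exactly the open cells along the upper-left border of that staircase.

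The key structural observation I would isolate is that the three members of $\widehat{E}$ are precisely the collapsed staircases that arise once the open region has been pressed against the right and bottom edges: a two-column shape (column $a-1$ one taller than column $a-2$), a hook (column $a-1$ and row $4$ of equal length), and a two-row shape (row $4$ one longer than row $3$, rows $1$ and $2$ full). The parity conditions built into the definitions of states $1$–$4$ are exactly what let player 1 pair each of player 2's moves with a reply that strips off a full ``layer'' of the staircase while keeping the row offsets correct, so that when row $1$ finally closes the board lands on one of these rigid shapes rather than on an intermediate staircase. I would accordingly organize the analysis by which row player 2 plays in, and for each state record the target member of $\widehat{E}$ chosen according to the parity flag, mirroring how the transitions in Lemma~\ref{L:midgame} select among states $1$–$7$.

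Concretely, the steps in order are: (1) set up the $(o_1,o_2,o_3,o_4)$/parity encoding and use the hypothesis that row $1$ has open cells to confirm the game is still live, so a legal reply always exists; (2) check that each of the three $\widehat{E}$ shapes is right-down closed and hence a genuine board position; (3) for each of the seven states, tabulate player 2's moves by row together with the matching player-1 reply, verifying on each line that the resulting counts satisfy one of the three $\widehat{E}$ descriptions; and (4) confirm that the pivot is available exactly at the moment row $1$ is about to empty. The step I expect to be the main obstacle is (3) for states $5$, $6$, and $7$, whose open regions are genuinely two-dimensional rather than near-rectangles: there one must verify that player 1's single reply hits the rigid count clauses of $\widehat{E}$ (the ``one more open cell'' and ``same number of cells'' conditions) \emph{exactly}, and reconciling those offsets with the parity flags is the delicate part — quite possibly requiring that such a wide state first be narrowed by repeated use of Lemma~\ref{L:midgame} until it is one layer away from a collapsed shape. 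As in Lemma~\ref{L:midgame}, once the encoding is fixed the verification is finite and mechanical, but it is tedious precisely because each $\widehat{E}$ target is a single inflexible shape with no slack rather than a parametrized family.
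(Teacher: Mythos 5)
Your step (3) — for each of the seven families in $\widehat{S}$, tabulate player 2's moves and exhibit a \emph{single} player-1 reply landing in $\widehat{E}$ — cannot work, and this is the heart of the matter. Every state in $\widehat{E}$ has at most one open cell in row 1 (the third has none; the first two have one only in their widest instances). But by Proposition \ref{P:legalmoves} every legal move lies in column at most $k+1$, where $k$ is the number of shaded or eliminated cells in row 1, so a single move eliminates at most one open cell of row 1, and a player-2 move plus a player-1 reply eliminate at most two. Hence from any $\widehat{S}$ state whose row 1 still has, say, four or more open cells — and states 1 through 7 all contain such wide instances, not just states 5--7 — no one-round tabulation can possibly end in $\widehat{E}$. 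Your closing hedge ("quite possibly requiring that such a wide state first be narrowed by repeated use of Lemma \ref{L:midgame}") is in fact the only viable mechanism, but you attach it speculatively to states 5, 6, 7 alone, whereas it is needed uniformly; you have the obstacle located in the wrong place.

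The paper's proof is built precisely around the timing fact you are missing: following the responses of Lemma \ref{L:midgame}, at most two cells of row 1 are consumed per pair of turns, and exactly two happens \emph{only} when starting from state 2. This yields concrete thresholds — keep playing the midgame strategy until row 1 has one open cell (states 1, 5, 6, 7) or two open cells (states 2, 3, 4, since these can route into state 2) — and only at those thresholds does player 1 deviate. Even then the case analysis is hierarchical rather than a flat table: state 1 at its threshold already \emph{is} a state in $\widehat{E}$; state 2 at two open cells either reduces to state 1 or is handled by two special replies; states 3 and 4 are discharged by showing the midgame responses funnel them into states 2, 5, or 6 at their respective thresholds; only states 5, 6, 7 at one open cell get genuinely new replies (e.g., taking cell $(a-3,4)$, or the last cell of row 2). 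So a correct write-up needs (i) the per-round row-1 consumption bound with the state-2 exception, (ii) the threshold definition, and (iii) the routing argument among states — none of which appear in your proposal beyond the final hedge.
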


\begin{proof}
First notice that if we consider a move by player 2 followed by the prescribed response of player 1 in Lemma \ref{L:midgame}, at most two cells from row 1 are eliminated in that pair of turns.  In fact, the only time two cells from row 1 are eliminated in the same pair of turns is starting from State 2.  So, it is sufficient to follow the response prescribed in Lemma \ref{L:midgame} until there is one open cell in row 1 for States 1, 5, 6, and 7, or until there are two open cells in row 1 for States 2, 3, and 4, and then consider how one may adapt strategies to obtain a state in $\widehat{E}$ at that point of game play.

In State 1, if there is only one open cell in row 1, this is already a state in $\widehat{E}$.

From State 2, if there are two open cells in row 1, and player 2 plays in row 1 or row 4, player 1 responds as in Lemma \ref{L:midgame} to get to State 1 with one open cell in row 1, which is a state in $\widehat{E}$.  If player 2 plays in row 2, player 1 plays the last cell in row 2 to get to a state in $\widehat{E}$.  If player 2 plays in row 3, player 1 responds in row 1 to get to a state in $\widehat{E}$.

From state 3, if there are two open cells in row 1, and player 2 plays in row 1, row 2, or row 4, player 1 responds as in Lemma \ref{L:midgame} to get to State 5 or State 6 with one open cell in row 1, both of which we consider below.  If player 2 plays in row 3, player 1 responds as in Lemma \ref{L:midgame} to get to State 2, still with two open cells in row 1, which was considered above.

From State 4, if there are two open cells in row 1, following game play as in Lemma \ref{L:midgame} leads to State 6 with one open cell in row 1, considered below, or State 2 with two open cells in row 1, considered above.

From State 5, if there is one open cell in row 1, and player 2 takes the last cell in row 1, then player 1 takes penultimate cell in row 3 and vice versa to get to a state in $\widehat{E}$.  If player 2 takes the last cell in row 2, then player 1 takes the first cell in row 3 and vice versa to get to a state in $\widehat{E}$.  If player 2 moves in row 4, player 1 takes the last cell in row 1 to get to a state in $\widehat{E}$.

From State 6 when there is one open cell in row 1, if player 2 moves in row 1 or row 2, player 1 takes the final cell of row 2 to get to a state in $\widehat{E}$.  If player 2 moves in row 3 or row 4, player 1 takes cell $(a-3,4)$ to get to a state in $\widehat{E}$.

From State 7 when there is one open cell in row 1, if player 2 moves in row 1 or row 2, player 1 takes the final cell of row 2 to get to a state in $\widehat{E}$.    If player 2 takes the cell $(a-3,3)$, then player 1 takes cell $(a-3,4)$ to get to a state in $\widehat{E}$.  Otherwise if player 2 takes a different cell $(c,3)$ with $c<a-3$, then player 1 responds by taking cell $(c-1,4)$, and if player 2 plays in row 4, player 1 takes one cell in row 3, returning to State 7.
\end{proof}

\begin{proof}[of Theorem \ref{T:fivewin}]
By Lemma \ref{L:start}, player 1 has a sequence of moves leading to a state in $\widehat{S}$.

By Lemma \ref{L:midgame}, if the board is in a state in $\widehat{S}$ at the end of player 1's turn, no matter what move player 2 makes, player 1 has a response to return the board to a state in $\widehat{S}$.

By Lemma \ref{L:end}, if the board is in a state in $\widehat{S}$ with at most two open cells remaining in row 1, no matter what move player 2 makes, player 1 has a response to turn the board to a state in $\widehat{E}$.

Once the board is in a state from $\widehat{E}$, player 1 has a tit-for-tat response to any move from player 2.  In particular, there are either two rows (or two columns, or one row and one column) with open cells.  Without loss of generality consider the case with two rows of open cells.  When player 2 takes cell $(a-1,3)$ or cell $(a-2,4)$, player 1 takes cell $(a-1,4)$ and wins the game.  Until then, when player 2 takes cells from one row, player 1 takes the same number of cells from the other row.
\end{proof}

While we provided a strategy for player 1 to win, we admit that the proof of this strategy is a long list of case work. This strategy was determined by having a computer search through all $\binom{(a-1)+4}{4}$ possible shadings of an $(a-1) \times 4$ board and recursively label each as a winning or losing position for player 1, and then combing through winning states by hand to describe patterns within the of winning states that could be used as $\widehat{S}$.  This is certainly not the only winning strategy for player 1 or the only set that could be used for $\widehat{S}$ in a similar strategy.  This strategy is also notably more complex than the arguments for $b<5$, and while we conjecture a strategy for a player 1 win exists for larger $b$, the current methodology becomes increasingly cumbersome. 

The set of states $\widehat{S}$ contains states where moves are available in any of the four rows.  These are convenient descriptions of families of board shadings, where the lattice path dividing the eliminated cells from the open cells is translated horizontally, depending on $k$.  However, these kinds of states are only convenient descriptions of families of states for player 1 to aim for once cells have been eliminated in all but the final row, so that moves are available in all four rows, rather than restricted to the top 2 or 3 rows of the board.  This means that to come up with a similar strategy for larger $b$, the starting strategy of the game will take longer, to give enough moves for each of the top $b-2$ rows of the board to have non-zero eliminated cells.

\section{Other Directions}\label{S:other}

We conclude with comments on several other directions of interest.

\subsection{Strategy for normal play}

In most of this paper, we considered a permutation game where the first player to complete either an $I_a$ or a $J_b$ pattern loses, i.e., the mis\`{e}re version of the game. Here we consider normal play, i.e., the first player to complete either an $I_a$ pattern or a $J_b$ pattern \emph{wins}.

We describe the strategy in terms of the same boards as before.  In both games, the game is played on an $(b-1) \times (a-1)$ board.  In mis\`{e}re play, the player who claims the lower right corner, indexed as $(a-1,b-1)$, is therefore the winner, since the entire board is eliminated, and their opponent must either complete an $I_a$ or $J_b$ pattern as the next move.

However, in normal play, the penultimate move will be in column $a-1$ or row $b-1$, giving the player an opportunity to finish the appropriate monotone subsequence.  Since playing in column $a-1$ or row $b-1$ gives the opponent a win, players seek to \emph{not} play in this row and column.  In other words, players wanting to win, try to restrict themselves to the $(b-2) \times (a-2)$ subboard.  A player who claims cell $(a-2,b-2)$ then forces their opponent to play in the last row or last column, which allows the player who claimed $(a-2,b-2)$ to win.  In other words, normal play requires the same strategy as mis\`{e}re play, but on a board with one fewer row and one fewer column.  This means that player 1 has a winning strategy for normal play when $4 \leq b \leq 6$.  The parity of $a$ determines the winner when $b=3$. And player 2 is guaranteed a win for normal play when $b=2$ merely by completing a decreasing $J_2$ pattern on their first turn.  This observation exactly matches Theorem 9 of \cite{AAAHHMS09}.

More generally, Section 6 of \cite{AAAHHMS09} conjectures that normal play results in a first player win for $a \geq b \geq 4$.  The analysis of this paper supports that conjecture by addressing the $b=6$ case directly.  It remains to give a specific strategy for $b \geq 7$.

\subsection{Permutations and boards}

As $b$ increases, it is convenient to phrase the strategies of this paper in terms of board shading rather than in terms of the actual underlying permutations in the original problem statement.  However, at any point in the game, we may ask ``how many permutations would have produced this particular board shading?''  Clearly when $b=2$ and $b=3$, the permutations are unique.  But for larger boards, multiple permutations would result eliminating the same portion of the board.

\subsection{Number of moves}

Table \ref{T:range} shows the maximum and minimum number of moves possible in playing the Erd\H{o}s-Szekeres game for $a \geq b$ and $2 \leq b \leq 5$.  The minimum is given by two players forming a decreasing permutation of length $b$.  The maximum is given by the $(a-1)(b-1)+1$ bound given by Theorem \ref{T:ES}.  However the number of moves actually used by the strategies in this paper are in general somewhere between the two.  When $b=2$, the two players create an $I_a$ permutation.  When $b=3$, player 1 makes a strategic move to take cell $(a-1,2)$ which results in a final permutation one shorter than the maximum length in Theorem \ref{T:ES}.  When $b=4$, in general when player 2 makes a move that eliminates 2 cells, player 1 responds by eliminating 1 cell and vice versa.  The difference between $2a-1$ or $2a-3$ total moves is decided by how player 2 handles the endgame.  When $b=5$, although player 1 has a clear response to any move made by player 2, player 2 has far more options for what to do that have an impact on how quickly the game passes.  At its slowest, players take turns eliminating one cell at a time, and any turn that eliminates multiple cells comes from the starting moves or the end game.  At its fastest, player 2's turn and player 1's response eliminate 4 cells, resulting in a permutation half as long as the maximum, minus a small finite number $C$ of extra cells eliminated by choices in the start and end game.

\begin{table}
\begin{center}
\begin{tabular}{|c|c|c|c|}
\hline
$b$&minimum moves & maximum moves &actual moves\\
&&&using strategy\\
\hline
2&2&$a$&$a$\\
\hline
3&3&$2a-1$&$2a-2$\\
\hline
4&4&$3a-2$& $2a-3$ or $2a-1$\\
\hline
5&5&$4a-3$&between $2a-C$ and $4a-6$\\
\hline
\end{tabular}
\end{center}
\caption{Range of moves used in playing an $(a,b)$-permutation game.}
\label{T:range}
\end{table}

A number of interesting follow up questions remain.  For $b=5$, is there a more efficient winning strategy than the one presented here?  For larger $b$, what strategies exist, and how do they compare proportionally to the maximum length game guaranteed by Theorem \ref{T:ES}?

\subsection{Number of winning positions}

Considering the board shading interpretation of the game results in additional interesting questions.  We can shade a legal region of an $(a-1) \times (b-1)$ board in $\binom{(a-1)+(b-1)}{a-1}$ ways, by choosing the lattice path that divides the eliminated cells from the open cells.  Table \ref{T:percent} gives data for various choices of $a$ and $b$.  The first number is the number of game positions in class $\mathcal{P}$, while the number in parentheses is this number divided by $\binom{(a-1)+(b-1)}{a-1}$; in other words, the second number is the percent of game positions are in $\mathcal{P}$ (and thus are candidates to help form a set analogous to $\widehat{S}$ in the strategy for $a \geq b=5$).  Of note, when $b$ is constant but $a$ increases, these percentages overall decrease.  However, they show differences in parity, which is reflected in how the states of $\widehat{S}$ relied on the parity of open cells in row 1.  Do these values converge on a non-zero value as $a$ increases?  If so, what is it?

\begin{table}
\begin{center}
\scalebox{0.8}{
\begin{tabular}{|c||c|c|c|c|c|c|c|c|}
\hline
$b \backslash a$&2&3&4&5&6&7&8&9\\
\hline
\hline
2&1 (0.5)&2 (0.67)&2 (0.5)&3 (0.6)&3 (0.5)&4 (0.57)&4 (0.5)&5 (0.56)\\
\hline
3&&2 (0.33)&3 (0.3)&4 (0.27)&5 (0.24) &6 (0.21)&7 (0.19)&8 (0.18)\\
\hline
4&&&6 (0.3) &10 (0.29)&15 (0.27)&21 (0.25)&28 (0.23)&36 (0.22)\\
\hline
5&&&&18 (0.26)&31 (0.25)&46 (0.22)&67 (0.2)&91 (0.18)\\
\hline
6&&&&&58 (0.23)&103 (0.22)&164 (0.21)&253 (0.2)\\
\hline
\end{tabular}}

\end{center}
\caption{Total number (and percentage) of game positions on an $(a-1) \times (b-1)$ board in class $\mathcal{P}$}
\label{T:percent}
\end{table}

\acknowledgements
\label{sec:ack}

The author is grateful to two anonymous referees who provided detailed and helpful feedback that improved the presentation of this paper.

%\bibliographystyle{plainnat}

%\bibliography{ESrefs}
%\label{sec:biblio}

\end{document}